\newtheorem{theorem}{Theorem}[section]
\newtheorem{lemma}{Lemma}[section]
\newtheorem{corollary}{Corollary}[section]
\theoremstyle{definition}
\newtheorem{example}{Example}[section]
\theoremstyle{definition}
\newtheorem{remark}{Remark}[section]
\newtheorem{definition}{Definition}[section]
\numberwithin{equation}{section}
\begin{document}

\title[Equilibrium points, periodic solutions and the Brouwer fixed point theorem]{Equilibrium points, periodic solutions \\and the Brouwer fixed point theorem \\for convex and non-convex domains}

\author[G.~Feltrin]{Guglielmo Feltrin}

\address{
Department of Mathematics, Computer Science and Physics, University of Udine\\
Via delle Scienze 206, 33100 Udine, Italy}

\email{guglielmo.feltrin@uniud.it}

\author[F.~Zanolin]{Fabio Zanolin}

\address{
Department of Mathematics, Computer Science and Physics, University of Udine\\
Via delle Scienze 206, 33100 Udine, Italy}

\email{fabio.zanolin@uniud.it}

\thanks{Work performed under the auspices of the Grup\-po Na\-zio\-na\-le per l'Anali\-si Ma\-te\-ma\-ti\-ca, la Pro\-ba\-bi\-li\-t\`{a} e le lo\-ro Appli\-ca\-zio\-ni (GNAMPA) of the Isti\-tu\-to Na\-zio\-na\-le di Al\-ta Ma\-te\-ma\-ti\-ca (INdAM).
\\
\textbf{Preprint -- March 2022}}

\subjclass{34C25, 34A26, 52A20.}

\keywords{Equilibrium points, periodic solutions, positively invariant sets, Brouwer fixed point theorem, convex sets, star-shaped sets.}

\date{}

\dedicatory{}

\begin{abstract}
We show the direct applicability of the Brouwer fixed point theorem for
the existence of equilibrium points and periodic solutions for differential systems
on general domains satisfying geometric conditions at the boundary.
We develop a general approach for arbitrary bound sets and present applications to the case of convex and star-shaped domains.
We also provide an answer to a question raised in a recent paper of Cid and Mawhin.
\end{abstract}

\maketitle

\dedicatory{Dedicated to Professor Jean Mawhin for his coming 80th birthday}

\section{Introduction}\label{section-1}

The Brouwer fixed point theorem can be well considered as a
fundamental result in the area of topological fixed point
theorems. As clearly explained in the recent book of Dinca and
Mawhin \cite{DiMa-2021} and developed in the content of the book,
as well as in the survey articles \cite{Ma-2020, Pa-1999} and the
references therein, Brouwer theorem can be seen as a ``core
result'' from which several important theorems about the existence
of fixed points and periodic points can be proved. Further
connections between the Brouwer fixed point theorem and other
results (including theorems of combinatorial nature) can be found
in \cite{Ga-1979, PaJe-2003} and \cite{IKM-2014, IKM-2021, Pa-1989}.
From this perspective, an interesting line of research, already
pursued by several authors, is to establish different results of
Nonlinear Analysis and its application to differential equations
which can be obtained, more or less directly, from this theorem.

In the present paper, we follow such a point of view, in the
context of the theory of positively invariant sets for differential systems
in $\mathbb{R}^{N}$.
A classical general result from Dynamical Systems asserts that
if a positively invariant set is homeomorphic to a closed ball of
$\mathbb{R}^{N}$, then it contains an equilibrium point (see
Theorem~\ref{th-2.2}). An analogous result can be proved for
\textit{processes} (see \cite[Chapter~4]{Ha-1977} for the definition
and general properties of processes), guaranteeing the existence
of periodic points for periodic processes in positively invariant
sets with the fixed point property. All these results, when
applied to ordinary differential systems, require the uniqueness
(or at least forward uniqueness) of the solutions for the
associated initial value problems. When uniqueness is not assumed,
the study becomes technically more involved, because the associated
Poincar\'{e} map is multivalued \cite{AnGo-2006,DyGo-1983} and the classical general
Nagumo--Yorke--Bony--Brezis-type theorems, which are expressed by
sub-tangential conditions at the boundary \cite{Bo-1969,Br-1970,Na-1942,Yo-1967}, guarantee a weak form of
positively invariance, namely, that for each point in a given set
$M$ \textit{at least one solution} remains in $M$ for positive
time. Hence, if one considers a (multivalued) Poincar\'{e} map
$\Phi$, it occurs that $\Phi(u)\cap M\neq\emptyset$, for all
$u\in M$, instead of $\Phi(u)\subset M$, for all $u\in M$.

When $M\subset \mathbb{R}^{N}$ is a convex set and $f \colon \mathbb{R}^{N}\to \mathbb{R}^{N}$
is a continuous vector field, the sub-tangential condition for the weak flow-invariance for the differential system $\dot y = f(y)$ is expressed by the relation
\begin{equation}\label{eq-1.1}
\langle f(u),\nu\rangle \leq 0,\quad \text{for all $u\in \partial M$ and $\nu \in N(u)$,}
\end{equation}
where $N(u)$ denotes the set of all outer normals to $M$ at the point $u$.
Recently, Cid and Mawhin in \cite{CiMa-2021} provided a new proof of the existence of equilibrium points
for $f$ in a convex set $M$, under assumption \eqref{eq-1.1}, as a direct application
of the Brouwer theorem. Actually, the results in \cite{CiMa-2021} apply also to the existence
of $T$-periodic solutions in $M$ for non-autonomous vector fields. Related results can be also found in
\cite{FoGi-2016} using degree theory.
In \cite[Section 5]{CiMa-2021} it was raised
the question whether the Brouwer fixed point theorem can be still
applied if \eqref{eq-1.1} is relaxed to a similar condition where
the inequality holds only \textit{for some} (and not all) $\nu\in
N(u)$, when $M$ is a convex sets with nonempty interior.

A first contribution of the present paper is to provide a positive answer to the above question
(see Theorem~\ref{th-cm1}). This is illustrated in Section~\ref{section-2}, where we also
recall some basic concepts as well as a few classical facts from dynamical systems theory
which are then used in the proofs.
In our study, we adopt an approach previously introduced in
\cite{Ma-1977}, in the framework of the theory of \textit{bound sets},
and further developed in \cite{FeZa-1988} in order to obtain results of
\textit{strong flow-invariance}
(namely for each point in a given set, \textit{all the solutions} departing from the
point remain in the sets in future time). To show the effectiveness of this approach,
in Section~\ref{section-3}, following \cite{FeZa-1988},
we present our main result (cf. Theorem~\ref{th-main}), which ensures the existence of periodic
solutions for non-autonomous systems in general bound sets,
by assuming the existence of an auxiliary vector field
satisfying a suitable ``condition of inwardness''  \eqref{cond-th-main} at the boundary.
Then, with the aid of such a condition,
we further investigate applications to non-convex domains. In particular, we will provide
new results for star-shaped sets, using geometric hypotheses on Bony outer normals.

\smallskip

Throughout the article, the following notation is used. We denote with $\|\cdot\|$ the Euclidean norm in $\mathbb{R}^{N}$. The symbols $B(x_{0}, r)$ and $B[x_{0},r]$, where $x_{0}\in\mathbb{R}^{N}$ and $r>0$, represent the open
and closed balls centered at $x_{0}$ with radius $r$ respectively, i.e.~$B(x_{0},r) = \{x\in\mathbb{R}^{N} \colon \|x-x_{0}\|<r\}$ and $B[x_{0},r] = \{x\in\mathbb{R}^{N} \colon \|x-x_{0}\|\leq r\}$. Moreover, $\mathbb{S}^{N-1} :=\partial B(0,1) \subset \mathbb{R}^{N}$.
Given a subset $D\subset \mathbb{R}^{N}$,
we denote with $\mathrm{int\,}D$ its interior, with $\overline{D}$ its closure, and with $\partial D$ its boundary.

\section{Equilibrium points in convex bodies: a dynamical systems approach}\label{section-2}

In this section we study, from a dynamical point of view, the problems of existence of equilibrium points (zeros of vector fields) in compact convex sets with nonempty interior. Our aim is to show how
existence results can be obtained as a direct application of the Brouwer fixed point theorem, in the framework of the theory of positively invariant sets.
The search of zeros of a vector field $f$, or, equivalently,
equilibria for the differential system
\begin{equation}\label{eq-2.1}
\dot{y}= f(y),
\end{equation}
is a classical and well investigated topic in the area of
Nonlinear Analysis, including important applications to different
disciplines, like control theory \cite{Bl-1999,BlMi-2015,Fr-2018}.

\subsection{Preliminaries on dynamical systems}\label{section-2.1}

Let $X$ be a metric space. A \textit{dynamical system} on $X$ is a \textit{continuous} map $\Pi \colon X\times \mathbb{R}\to \mathbb{R}$ such that
\begin{itemize}
\item[$(a_{0})$] $\Pi(x,0) = x$, for every $x\in X$;
\item[$(a_{1})$] $\Pi(\Pi(x,s),t) = \Pi(x,s+t)$, for every $x\in X$ and $s,t \in \mathbb{R}$.
\end{itemize}
Following \cite{BhSz-1970}, we will use the simplified notation
\begin{equation*}
x\cdot t = \Pi(x,t), \quad \Pi^{t} \colon x \mapsto x\cdot t.
\end{equation*}
Moreover, when no confusion may occur, the dot ``$\, \cdot \,$'' will be omitted. In this manner, $(a_{0})$ and $(a_{1})$ can be written equivalently, as
\begin{itemize}
\item[$(a_{0})$] $x0 = x$ (respectively $\Pi^0 = \mathrm{Id}_X$), for every $x\in X$;
\item[$(a_{1})$] $(xs)t= x(s+t)$ (respectively $\Pi^{t}\circ \Pi^{s} = \Pi^{s+t}$), for every $x\in X$ and $s,t \in \mathbb{R}$;
\end{itemize}
while the continuity of the dynamical system can be expressed in sequential form as
\begin{equation*}
x_{n}\to_{_X} x , \quad  t_{n}\to_{_\mathbb{R}} t \quad \Longrightarrow \quad x_{n}t_{n} \to_{_X} xt.
\end{equation*}
An \textit{equilibrium point} or \textit{rest point} (or \textit{critical point}, according to \cite{BhSz-1970}) for the dynamical system is a point $z\in X$ such that $zt=z$ for every $t\in \mathbb{R}$.
By a \textit{periodic point} we mean a point $w\in X$ such that there exists $\tau\neq0$ such that $w\tau = w$. Without loss of generality, we can suppose that $\tau>0$.
It is a standard fact that if $w$ is a periodic point and not an equilibrium, then there exists a minimal period (\textit{fundamental period}) $T>0$ such that any period of $w$ is an integer multiple of $T$ (cf.~\cite[Theorem~2.12]{BhSz-1970}).
Moreover, any point $x$, belonging to the orbit $\gamma(w):= \{ wt \colon t\in \mathbb{R}\}$ of $w$, is also a periodic point with the same period and $\gamma(w)$ is homeomorphic to $\mathbb{S}^{1}=\{(x,y)\in \mathbb{R}^{2} \colon x^{2}+y^{2}=1\}= \partial B(0,1)$.

If the dynamical system is the flow associated with the autonomous differential equation
\eqref{eq-2.1}
where $f \colon \Omega \to \mathbb{R}^{N}$ is a continuous vector field defined on a nonempty open set $\Omega \subset \mathbb{R}^{N}$ and such that the (local) uniqueness for the initial value problems holds, then $z\in \Omega$ is an equilibrium point if and only if $f(z)=0$. Technically, if the global existence of the solutions for \eqref{eq-2.1} is not guaranteed, we should speak of a \textit{local dynamical system} associated with \eqref{eq-2.1} (see \cite{LaS-1976}), however, Vinograd's Theorem \cite{BhSz-1970, NeSt-1960} allows to enter in the setting of dynamical systems even in absence of global existence of the solutions for \eqref{eq-2.1}. In any case, all the results we are going to present in this article, even if restricted to the case of dynamical systems, apply to local dynamical systems as well (see also \cite{Ca-1972, Ur-1969} for a discussion about the
reparametrization problem). The crucial step to define a (local) dynamical system associated with \eqref{eq-2.1} comes from the assumption of the uniqueness for the solutions of the Cauchy problems and, consequently, from the theorem of continuous dependence of the solutions from initial data (cf.~\cite{Ha-1980, NeSt-1960, Se-1971}). From this, we can set
\begin{equation*}
\Pi^{t}(x) := \tilde{y}(t,x),
\end{equation*}
where $\tilde{y}= \tilde{y}(\cdot,x)$ is the solutions of \eqref{eq-2.1} with $y(0)=x\in \Omega$.

The following lemma is borrowed from \cite[Lemma~2.15, p.~18]{BhSz-1970}.

\begin{lemma}\label{lem-2.1}
For a dynamical system on a metric space $X$, let $(x_{n})_{n}$ be a sequence of $T_{n}>0$ periodic points such that $x_{n}\to z$ in $X$ and $T_{n}\to 0^{+}$. Then, $z$ is an equilibrium point.
\end{lemma}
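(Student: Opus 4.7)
The goal is to verify that $zt = z$ for every $t\in\mathbb{R}$. The plan is to exploit periodicity of the $x_n$ at many time scales, combined with the continuity axiom of the dynamical system, by approximating $t$ through integer multiples of the vanishing periods $T_n$.

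Concretely, for a fixed $t\in\mathbb{R}$ I would set $k_n := \lfloor t/T_n \rfloor$ (or any integer with $|k_n T_n - t|\leq T_n$). Since $T_n \to 0^+$, this forces $k_n T_n \to t$ in $\mathbb{R}$. The periodicity of $x_n$ gives $x_n\cdot(k_n T_n) = x_n$ for every $n$, because any integer multiple of a period is itself a period.

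Now I would pass to the limit in this identity using the sequential continuity of $\Pi$ recalled just before the lemma: from $x_n\to z$ in $X$ and $k_n T_n \to t$ in $\mathbb{R}$, the joint continuity gives $x_n\cdot(k_n T_n) \to z\cdot t$. On the other hand, the left-hand side equals $x_n$ for all $n$, hence converges to $z$. By uniqueness of limits, $z\cdot t = z$. Since $t\in\mathbb{R}$ was arbitrary, $z$ is an equilibrium point.

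I do not expect any real obstacle here: the only substantive point is that the hypothesis $T_n\to 0^+$ is precisely what allows the choice of integers $k_n$ realizing $k_n T_n \to t$ for any prescribed $t$. Without this, one could at best approximate real numbers lying in the closure of the additive subgroup generated by the $T_n$, and the conclusion would fail.
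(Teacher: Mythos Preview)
Your argument is correct and is exactly the classical proof of this fact (the one found in Bhatia--Szeg\H{o}, to which the paper refers). Note that the paper does not supply its own proof of this lemma: it merely quotes the result from \cite[Lemma~2.15]{BhSz-1970}, so there is nothing further to compare.
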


For the next results, we need to introduce two further definitions.
Let $M\subset X$ be a subset of a metric space with a dynamical system $\Pi$. We say that $M$ is \textit{positively invariant} (or \textit{flow-invariant}) if for each $x\in M$ we have $xt\in M$ for all $t\geq 0$. Similarly, we define a \textit{negatively invariant} set, by the same relation with $t\leq 0$. Finally, $M$ is an \textit{invariant set} if it is both positively and negatively invariant, namely $xt\in M$ for all $t\in \mathbb{R}$ whenever $x\in M$. According to \cite[Chapter~2]{BhSz-1970}, we have the following result.

\begin{lemma}\label{lem-2.2}
Let $X$ be a metric space with a dynamical system and let $M\subset X$. If $M$ is positively/negatively invariant, then
$X\setminus M$ is negatively/positively invariant. Moreover, also $\overline{M}$ and $\mathrm{int\,}M$ are positively/negatively invariant.
If $M$ is open or closed and $\partial M$ is invariant, also $M$ is invariant.
\end{lemma}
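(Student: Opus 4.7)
My plan is to handle the four assertions in turn, using only the group properties $(a_0)$ and $(a_1)$ together with the joint continuity of $\Pi$. Throughout I focus on the positively invariant case; the negatively invariant version follows by reversing time.

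For the first assertion, I suppose $M$ is positively invariant and fix $x\in X\setminus M$ and $t\leq 0$. Arguing by contradiction, if $xt\in M$, then positive invariance applied with time $-t\geq 0$ would give $(xt)(-t)=x(t+(-t))=x0=x\in M$, contradicting $x\in X\setminus M$; hence $xt\in X\setminus M$.

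For the closure, I pick $x\in\overline{M}$ and $t\geq 0$, choose a sequence $(x_n)\subset M$ with $x_n\to x$, note $x_nt\in M$ by positive invariance, and pass to the limit via the sequential continuity of $\Pi$ to conclude $xt\in\overline{M}$. For the interior, the key observation is that property $(a_1)$ makes $\Pi^{-t}$ a continuous inverse of $\Pi^{t}$, so each $\Pi^{t}$ is a homeomorphism of $X$ and, in particular, an open map. Thus $\Pi^{t}(\mathrm{int\,}M)$ is an open set contained in $\Pi^{t}(M)\subset M$ for $t\geq 0$, and so lies in $\mathrm{int\,}M$.

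The last statement is the only one requiring genuine work. I will reduce to the case where $M$ is closed: if $M$ is open then $X\setminus M$ is closed, negatively invariant by the first assertion, and has boundary $\partial M$, which is invariant by hypothesis; once the closed case gives invariance of $X\setminus M$, a second application of the first assertion returns invariance of $M$. So assume $M$ closed and positively invariant with $\partial M$ invariant, and fix $x\in M$ and $t<0$; the goal is $xt\in M$. I introduce $A:=\{s\in[t,0] : xs\in M\}$, which contains $0$ and is closed in $[t,0]$ since $M$ is closed and $s\mapsto xs$ is continuous, and I set $s^{*}:=\min A$. If $s^{*}=t$ we are done, so suppose $s^{*}>t$; then $xs\notin M$ for all $s\in[t,s^{*})$. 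I claim $xs^{*}\in\partial M$: otherwise $xs^{*}\in\mathrm{int\,}M$, and continuity of $s\mapsto xs$ would force $xs\in M$ for $s$ slightly less than $s^{*}$, violating minimality. Invariance of $\partial M$ applied to $xs^{*}$ with time $t-s^{*}\in\mathbb{R}$ then yields $xt=(xs^{*})(t-s^{*})\in\partial M\subset M$, finishing the proof. The main obstacle is precisely this minimality-plus-boundary-invariance step; the other three assertions are essentially bookkeeping with $(a_{0})$, $(a_{1})$ and continuity.
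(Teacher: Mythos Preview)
The paper does not prove this lemma; it simply attributes it to \cite[Chapter~2]{BhSz-1970}, so there is no argument to compare against. Your proof is correct and is essentially the standard one.

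One small point on the reading of the final sentence. You treat it as still sitting under the hypothesis ``$M$ positively invariant'', but the paper's own use of the lemma (concluding that $\mathcal{D}(\Gamma)$ is invariant directly from the invariance of $\Gamma=\partial\mathcal{D}(\Gamma)$, with no prior positive-invariance claim) shows it is intended as a standalone statement: if $M$ is open or closed and $\partial M$ is invariant, then $M$ is invariant. Your closed-case argument already proves this stronger version once you run the same minimality step for $t>0$ as well as $t<0$; positive invariance was only used to skip that symmetric half. Likewise, your reduction of the open case to the closed case survives: $X\setminus M$ is closed with $\partial(X\setminus M)=\partial M$ invariant, so the closed case gives $X\setminus M$ invariant and hence $M$ invariant, without ever invoking the first assertion.
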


A topological space $Y$ has the \textit{Fixed Point Property} (FPP) if any continuous map $\psi \colon Y\to Y$ has (at least) a fixed point. The (FPP) is invariant under homeomorphisms and is preserved under retractions on subspaces (see \cite{Bi-1969} for more information). Hence, as a consequence of the Brouwer fixed point theorem, we can state the following (cf.~\cite[Theorem~3]{Ma-2020}).

\begin{lemma}\label{rem-2.1}
Let $X\subset \mathbb{R}^{N}$ be a compact set which is homeomorphic to a (retract of a) closed ball of $\mathbb{R}^{N}$. Then, $X$ has the \emph{(FPP)}.
\end{lemma}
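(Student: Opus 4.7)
The plan is to reduce the claim to the Brouwer fixed point theorem itself, which directly yields the FPP for any closed ball $B\subset\mathbb{R}^{N}$, by invoking two classical closure properties of the FPP: invariance under homeomorphism, and preservation under retracts. Both are already alluded to in the statement (citing \cite{Bi-1969}), so in principle the lemma is a one-line consequence, but I would still record the short proofs of these two principles since they carry the whole argument.

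First I would prove homeomorphism invariance: if $Y$ has the FPP and $h\colon Y\to X$ is a homeomorphism, then for any continuous $\psi\colon X\to X$ the conjugated map $h^{-1}\circ\psi\circ h\colon Y\to Y$ has a fixed point $y^{*}$, and $h(y^{*})$ is then a fixed point of $\psi$. Next I would prove the retract principle: if $Y$ has the FPP, $X\subset Y$, and $r\colon Y\to X$ is a retraction (so that the inclusion $i\colon X\hookrightarrow Y$ satisfies $r\circ i=\mathrm{Id}_{X}$), then for $\psi\colon X\to X$ continuous the extended self-map $i\circ\psi\circ r\colon Y\to Y$ has a fixed point $y^{*}$. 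Since the range of $i\circ\psi\circ r$ lies in $X$, the point $y^{*}$ must itself belong to $X$, and then $r(y^{*})=y^{*}$ together with $y^{*}=\psi(r(y^{*}))$ gives $\psi(y^{*})=y^{*}$.

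With these two facts, the lemma follows by a single chain of reductions: the closed ball $B$ has the FPP by Brouwer; any retract $Y$ of $B$ inherits the FPP by the retract principle; and any compact $X$ homeomorphic to $Y$ inherits the FPP by homeomorphism invariance. The case in which $X$ is homeomorphic to the ball itself is already contained in this scheme by taking $Y=B$ with the identity retraction.

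I do not foresee any real obstacle; the argument is essentially a bookkeeping exercise combining Brouwer with two elementary functorial properties of the FPP. The only point requiring a moment of care is parsing the parenthetical ``(retract of a)'' in the hypothesis, which I would handle uniformly via the chain above so that both the ``homeomorphic to a ball'' and ``homeomorphic to a retract of a ball'' cases are covered by the same proof.
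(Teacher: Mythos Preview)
Your proposal is correct and follows exactly the approach the paper indicates: the paper does not give a detailed proof of this lemma but simply records, just before the statement, that the FPP is invariant under homeomorphisms and preserved under retractions (citing \cite{Bi-1969}), and then presents the lemma as an immediate consequence of Brouwer's theorem (citing \cite[Theorem~3]{Ma-2020}). Your write-up merely spells out these two elementary closure properties and chains them with Brouwer, which is precisely what the paper leaves implicit.
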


In particular, any (nonempty) closed bounded set in $\mathbb{R}^{N}$ has the (FPP) (which is an equivalent formulation of the Brouwer fixed point theorem for compact convex sets); if $\Gamma\subset \mathbb{R}^{2}$ is a Jordan curve (the homeomorphic image of $\mathbb{S}^{1}$) with $A_{\mathrm{int}}(\Gamma)$ and $A_{\mathrm{ext}}(\Gamma)$ its interior and exterior open domains, then $\mathcal{D}(\Gamma):=A_{\mathrm{int}}(\Gamma)\cup \Gamma = A_{\mathrm{int}}(\Gamma)
\cup \partial A_{\mathrm{int}}(\Gamma)$ is homeomorphic to $B[0,1]$ (by the Jordan--Sch\"{o}nflies theorem \cite{Mo-1977}) and hence it has the (FPP).

An application of Lemma~\ref{lem-2.1} gives the following.

\begin{theorem}\label{th-2.2}
Let $M$ be a (nonempty) compact positively/negatively invariant in a metric space with a dynamical system. If $M$ has the \emph{(FPP)}, then it contains an equilibrium point.
\end{theorem}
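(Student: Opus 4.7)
The plan is to produce a sequence of periodic points with periods tending to zero and then invoke Lemma~\ref{lem-2.1}. Assume first that $M$ is positively invariant. Pick any sequence $T_n \to 0^+$ and consider, for each $n$, the time-$T_n$ map $\Pi^{T_n}$. Positive invariance means $\Pi^{T_n}(M) \subset M$, and the continuity axiom of the dynamical system ensures that the restriction $\Pi^{T_n}|_M : M \to M$ is a continuous self-map of $M$.

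Since $M$ has the (FPP), each such restriction admits a fixed point $x_n \in M$, i.e.\ $x_n\cdot T_n = x_n$. Thus $x_n$ is a periodic point with period $T_n$. Now exploit the compactness of $M$: passing to a subsequence, $x_n \to z$ for some $z \in M$. The hypotheses of Lemma~\ref{lem-2.1} are met ($x_n$ are $T_n$-periodic, $T_n \to 0^+$, $x_n \to z$ in the metric space), so $z$ is an equilibrium point of the dynamical system, and $z \in M$ as desired.

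For the negatively invariant case, the argument is entirely symmetric: replace $\Pi^{T_n}$ by $\Pi^{-T_n}$, which now maps $M$ into $M$ by negative invariance. A fixed point $x_n$ of $\Pi^{-T_n}|_M$ satisfies $x_n \cdot (-T_n) = x_n$, and applying $\Pi^{T_n}$ to both sides yields $x_n = x_n \cdot T_n$, so $x_n$ is again $T_n$-periodic, and the same convergence argument produces the equilibrium.

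I do not foresee a genuine obstacle; the delicate point is merely the observation that the (possibly wild) compact set $M$ inherits a continuous self-map from the flow for each small time, which is exactly where the invariance hypothesis is used. The rest is the combination of (FPP) with the classical fact encoded in Lemma~\ref{lem-2.1}. Note that the theorem extends to local dynamical systems without change, since for every $T>0$ sufficiently small the time-$T$ map is globally defined on the compact set $M$ by positive invariance.
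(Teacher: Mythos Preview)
Your proof is correct and follows essentially the same approach as the paper: apply the (FPP) to the time-$T_n$ maps to obtain periodic points, extract a convergent subsequence by compactness, and conclude via Lemma~\ref{lem-2.1}. The paper handles the negatively invariant case with a one-line remark (``a similar argument works''), whereas you spell it out; otherwise the arguments are identical.
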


\begin{proof}
Let $M$ be positively invariant (for the negatively invariant sets a similar argument works). We take a decreasing sequence of positive real numbers $\tau_{n}$ with $\tau_{n}\searrow 0$ and, for each $n$, consider the continuous map $\Pi_{n}:=\Pi^{\tau_{n}}$. By the positive invariance of $M$ we have that $\Pi_{n}\colon M\to M$ and, by the (FPP), there exists at least one fixed point $y_{n}\in M$ for $\Pi_{n}$, so that $y_{n}=y_{n} \tau_{n}$. By the compactness of $M$ we can find a subsequence $x_{n}:=y_{k_{n}}$ with $x_{n}\to x^{*}\in M$. Setting $T_{n}:= \tau_{k_{n}}$, we have that $(x_{n})$ is a sequence
of periodic points in $M$ with periods $T_{n}\to 0^{+}$ and Lemma~\ref{lem-2.1} ensures that $x^{*}$ is an equilibrium point.
\end{proof}

As an application of Theorem~\ref{th-2.2} we can obtain a classical result for planar dynamical systems, according to which, if $\Gamma$ is a closed orbit of a dynamical system in the plane, then $A_{\mathrm{int}}(\Gamma)$ contains an equilibrium point.
Indeed, by the last statement in Lemma~\ref{lem-2.2}, $\mathcal{D}(\Gamma)$ is an invariant set, since
$\Gamma=\partial A_{\mathrm{int}}(\Gamma)$ is invariant (in fact it is an orbit). Hence, there is an equilibrium point $x^{*}$ in $\mathcal{D}(\Gamma)$, with $x^{*}\notin \Gamma$ and the claim is proved.

\subsection{Zeros of vector fields and convex bodies}\label{section-2.2}

From now on, we focus our attention on the (local) dynamical systems defined by ordinary differential equations in $\mathbb{R}^{N}$. An immediate consequence of Theorem~\ref{th-2.2} is the following.

\begin{corollary}\label{cor-2.1}
Let $\Omega\subset \mathbb{R}^{N}$ be an open set and $f \colon \Omega\to \mathbb{R}^{N}$ be a continuous map. Let $C\subset \Omega$ be a compact set with the \emph{(FPP)}. Assume that:
\begin{itemize}[leftmargin=38pt,labelsep=6pt]
\item[$(H_{\textrm{uniq}})$] all the Cauchy problems associated with \eqref{eq-2.1} have a unique solution;
\item[$(H_{\textrm{inv}})$]  the set $C$ is positively invariant with respect to the solutions of \eqref{eq-2.1}, namely, for each $x_{0}\in C$, the solution $\tilde{y}$ of \eqref{eq-2.1} with $\tilde{y}(0)= x_{0}$ is such that $\tilde{y}(t)\in C$ for all $t\geq 0$ in the right maximal interval of existence of the solution.
\end{itemize}
Then, there exists $z\in C$ with $f(z)=0$.
\end{corollary}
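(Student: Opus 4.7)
The plan is to reduce Corollary~\ref{cor-2.1} to a direct invocation of Theorem~\ref{th-2.2}. First, I would use the uniqueness hypothesis $(H_{\textrm{uniq}})$ together with continuous dependence on initial data to define a local dynamical system on $\Omega$ by setting $\Pi^{t}(x) := \tilde{y}(t,x)$, where $\tilde{y}(\cdot,x)$ is the unique solution of \eqref{eq-2.1} starting at $x$, exactly as described in the preliminaries. This provides the dynamical-systems framework needed to talk about flow-invariance and equilibrium points.

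Next, I would verify that the solutions starting in $C$ are actually defined for all positive times, so that $\Pi^{t}\colon C\to C$ is well defined for every $t\geq 0$. By $(H_{\textrm{inv}})$, any solution $\tilde{y}(\cdot,x_{0})$ with $x_{0}\in C$ stays in $C$ on its right maximal interval of existence. Since $C$ is a compact subset of the open set $\Omega$, the standard continuation theorem for ODEs then forces the right maximal interval to be $[0,+\infty)$ (the trajectory cannot accumulate on $\partial\Omega$ nor escape to infinity while remaining in $C$). Hence $C$ is a compact, positively invariant set for the (possibly local) dynamical system on which the argument of Theorem~\ref{th-2.2} applies unchanged, as remarked in the preliminaries.

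Having set the stage, I would simply apply Theorem~\ref{th-2.2}: since $C$ is compact, has the (FPP) by assumption, and is positively invariant under the flow $\Pi$, there exists $z\in C$ which is an equilibrium point of $\Pi$, i.e.\ $\Pi^{t}(z)=z$ for all $t\geq 0$. Translating this back to the ODE: the constant function $t\mapsto z$ and the solution $\tilde{y}(\cdot,z)$ both satisfy $\dot y = f(y)$ with $y(0)=z$, so by $(H_{\textrm{uniq}})$ they coincide, which yields $\dot{\tilde{y}}(0,z)=0$, that is $f(z)=0$.

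The only genuinely subtle point I see is the transition from the statement of Theorem~\ref{th-2.2}, which is phrased for global dynamical systems, to the present setting where solutions are a priori only locally defined; but this is exactly what the combination of compactness of $C$ and the invariance hypothesis is for, and the preliminaries explicitly grant that the theorem extends to the local case (e.g.\ via Vinograd's theorem). Everything else is a clean assembly of the pieces already developed in Section~\ref{section-2.1}.
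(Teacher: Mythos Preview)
Your proposal is correct and follows exactly the route the paper intends: the corollary is stated as an immediate consequence of Theorem~\ref{th-2.2}, and you have simply spelled out the details (uniqueness gives a local dynamical system, compactness plus $(H_{\textrm{inv}})$ gives global forward existence and positive invariance, then Theorem~\ref{th-2.2} yields an equilibrium $z$, which means $f(z)=0$). The only superfluous step is invoking $(H_{\textrm{uniq}})$ again at the end: since $\Pi^{t}(z)=\tilde{y}(t,z)=z$ by definition, the constancy of the solution is immediate and $f(z)=\dot{\tilde{y}}(0,z)=0$ follows directly.
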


Now, the question that arises is whether Corollary~\ref{cor-2.1}
can be improved by removing the hypothesis of uniqueness for the initial value problems. An interesting step in this direction has been recently achieved by Cid and Mawhin in \cite{CiMa-2021}, where, as a consequence of a more general result on periodic solutions for non-autonomous systems, the following result is proved (cf.~\cite[Corollary~5.1]{CiMa-2021})

\begin{theorem}\label{th-cm}
Let $C \subset \mathbb{R}^{N}$ be a nonempty, closed, bounded and convex set,
and $f \colon C \to  \mathbb{R}^{N}$ be a continuous function.
Suppose that, for each outer normal field
$\nu \colon \partial C \to \mathbb{S}^{N-1}$, it holds that
\begin{equation}\label{eq-cm}
\langle f(x), \nu(x) \rangle \leq 0, \quad \text{for all $x\in \partial C$.}
\end{equation}
Then, the differential equation \eqref{eq-2.1} has a constant solution
in $C$.
\end{theorem}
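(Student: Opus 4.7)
The plan is to reduce Theorem~\ref{th-cm} to Corollary~\ref{cor-2.1} by a smoothing-plus-perturbation argument. The difficulty is that $f$ is only continuous, so $(H_{\text{uniq}})$ need not hold and Corollary~\ref{cor-2.1} cannot be invoked directly; I will build a sequence of smooth (hence locally Lipschitz) vector fields $g_{n}$ approximating $f$ uniformly on $C$ and satisfying a \emph{strict} inward condition on $\partial C$, so that each $g_{n}$ admits a zero $z_{n}\in C$, and a limit point of $\{z_{n}\}$ is a zero of $f$.

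First I would reduce to the case $\mathrm{int\,}C\ne\emptyset$ (otherwise work inside $\mathrm{aff}(C)$), fix $x_{0}\in \mathrm{int\,}C$ and $r>0$ with $B[x_{0},r]\subset C$. The geometric lemma I need is the quantitative strict inwardness of the radial field toward $x_{0}$: for every $x\in\partial C$ and every outer unit normal $\nu\in N(x)$, the inclusion $x_{0}+r\nu\in C$ combined with the supporting hyperplane inequality $\langle x_{0}+r\nu-x,\nu\rangle\le 0$ gives
\begin{equation*}
\langle x_{0}-x,\nu\rangle \le -r.
\end{equation*}

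Next I would extend $f$ continuously to $\mathbb{R}^{N}$ (Tietze) and mollify to obtain smooth $f_{n}$ with $\delta_{n}:=\|f_{n}-f\|_{L^{\infty}(C)}\to 0$. Choose $\varepsilon_{n}\searrow 0$ with $\delta_{n}<\varepsilon_{n}r/2$ and set
\begin{equation*}
g_{n}(x):= f_{n}(x)+\varepsilon_{n}(x_{0}-x).
\end{equation*}
For $x\in\partial C$ and $\nu\in N(x)$, the hypothesis $\langle f(x),\nu\rangle\le 0$ together with the above estimate yields
\begin{equation*}
\langle g_{n}(x),\nu\rangle \le \delta_{n}-\varepsilon_{n}r \le -\varepsilon_{n}r/2 < 0,
\end{equation*}
so $g_{n}$ points strictly inward along every outer normal at every boundary point. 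Since $g_{n}$ is locally Lipschitz and $C$ is convex and closed, this strict inward condition prevents any trajectory of $\dot y=g_{n}(y)$ from reaching $\partial C$ from inside in forward time; hence $C$ is positively invariant for $g_{n}$. Corollary~\ref{cor-2.1} (applied with $C$, which has the (FPP) by the Brouwer theorem) then produces $z_{n}\in C$ with $g_{n}(z_{n})=0$, i.e.\ $f_{n}(z_{n})=-\varepsilon_{n}(x_{0}-z_{n})$.

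Finally, by compactness a subsequence $z_{n}\to z^{*}\in C$; since $C$ is bounded and $\varepsilon_{n}\to 0$, the right-hand side tends to $0$, while $f_{n}(z_{n})\to f(z^{*})$ by uniform convergence, giving $f(z^{*})=0$ and the constant solution $y\equiv z^{*}$ of~\eqref{eq-2.1}. The main obstacle I foresee is the calibration in the perturbation step: the mollification error $\delta_{n}$ can destroy the non-strict boundary inequality, so the radial correction $\varepsilon_{n}(x_{0}-x)$ must be large enough (relative to $\delta_{n}$) to restore a strict inward condition, yet small enough to vanish in the limit. The geometric inequality $\langle x_{0}-x,\nu\rangle\le -r$, which is the convexity-driven ingredient of the proof, is precisely what makes this balance possible.
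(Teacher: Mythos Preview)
The paper does not actually prove Theorem~\ref{th-cm}; it is quoted from Cid--Mawhin \cite{CiMa-2021}. What the paper \emph{does} prove is the companion Theorem~\ref{th-cm1} (nonempty interior, but only \emph{some} outer normal field), and your argument is essentially that proof transplanted verbatim: pick $x_{0}\in\mathrm{int\,}C$ and $r>0$ with $B[x_{0},r]\subset C$, use convexity to get $\langle x_{0}-x,\nu\rangle\le -r$, mollify $f$ to a Lipschitz approximation, add the radial correction $\varepsilon_{n}(x_{0}-x)$ to restore an inward condition, invoke positive invariance plus Corollary~\ref{cor-2.1}, and pass to the limit. The calibration you worry about is exactly the one the paper handles (there with $\varepsilon_{n}=1/n$ and $\delta_{n}\le r/n$).

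The one place where your write-up is thinner than it should be is the opening reduction ``otherwise work inside $\mathrm{aff}(C)$''. This is correct, but it is precisely the content that distinguishes Theorem~\ref{th-cm} from Theorem~\ref{th-cm1}, and Remark~\ref{rem-2.2} shows the distinction is not cosmetic. You should say explicitly that when $\mathrm{int\,}C=\emptyset$ every unit vector $w\perp\mathrm{aff}(C)$ gives a (constant) outer normal field on $\partial C=C$, so the hypothesis applied to $w$ and to $-w$ forces $f(C)\subset\mathrm{aff}(C)-p$; and that any outer normal field on the \emph{relative} boundary of $C$ inside $\mathrm{aff}(C)$ extends (by any fixed $w\in(\mathrm{aff}(C))^{\perp}$ on the relative interior) to an outer normal field on $\partial C$ in $\mathbb{R}^{N}$, so the hypothesis descends. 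With that paragraph added, your proof is complete and matches the paper's approach to Theorem~\ref{th-cm1}.
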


Assumption \eqref{eq-cm} corresponds to an equivalent formulation, for the case of convex sets, of the classical Nagumo condition \cite{Na-1942} for the weak flow-invariance of the set $C$ (see also \cite{Bo-1969, Br-1970, Cr-1972, Ha-1972, Re-1972, ReWa-1975, Yo-1967}).
In this context, the term \textit{weak flow-invariance} refers to the fact that any Cauchy problem with initial value in $C$ has \textit{at least} one local solution which is in $C$ for positive time. Hence $(H_{\mathrm{inv}})$ is satisfied if we further assume the uniqueness of the solutions for the initial value problems.
Notice, however, that Theorem~\ref{th-cm} is obtained (from the Brouwer fixed point theorem) without assuming the uniqueness hypothesis $(H_{\textrm{uniq}})$.

On the other hand, in \cite[Corollary~3]{FeZa-1988} the weak positive invariance is obtained under a more general geometric boundary condition,
at the expense of a stronger assumption on the convex set (which is required to have a nonempty interior),
as follows.

\begin{theorem}\label{th-fz}
Let $C \subset \mathbb{R}^{N}$ be a nonempty, closed, bounded and convex set with nonempty interior, and $f \colon C \to  \mathbb{R}^{N}$ be a continuous function. Suppose that there exists an outer normal field $\nu \colon \partial C \to \mathbb{S}^{N-1}$, such that condition \eqref{eq-cm} holds. Then, $C$ is a weakly invariant set for \eqref{eq-2.1}.
\end{theorem}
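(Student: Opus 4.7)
The strategy is to deduce, from the given hypothesis (which tests only one outer normal per boundary point), the full Nagumo-type sub-tangential condition
\[
\langle f(x), \mu \rangle \leq 0 \quad \text{for every $x \in \partial C$ and every outer normal $\mu \in N(x)$,}
\]
and then to invoke the classical weak flow-invariance result for convex bodies under this stronger condition.

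\emph{Step 1 (from ``some'' to ``all'' normals).} Since $C$ is a convex body, the set $\mathcal{S} \subset \partial C$ of \emph{smooth} boundary points---those at which the outer normal is unique up to positive scalar---is dense in $\partial C$ (a classical convex-analytic fact). At any $x \in \mathcal{S}$ the selection $\nu(x)$ is forced to coincide with the unique outer unit normal, so the hypothesis immediately yields $\langle f(x), \nu(x) \rangle \leq 0$. For a non-smooth $x_* \in \partial C$ and any $\mu \in N(x_*)$, I would approximate by smooth points $x_n \to x_*$ whose (unique) outer unit normals $\nu(x_n)$ tend to some $\mu^* \in N(x_*)$; continuity of $f$ then transports the inequality, giving $\langle f(x_*), \mu^* \rangle \leq 0$. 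Standard convex geometry then ensures that $N(x_*)$ is the closed conic hull of all such limit directions, so the inequality extends to every $\mu \in N(x_*)$ by taking nonnegative combinations.

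\emph{Step 2 (full Nagumo implies weak flow-invariance).} With the full sub-tangential condition established, I would fix $x_0 \in C$ and extend $f$ to $\tilde f := f \circ P_C$ on $\mathbb{R}^N$, where $P_C$ is the metric projection onto $C$ (which is $1$-Lipschitz since $C$ is closed and convex). Peano's theorem produces a local solution $y$ of $\dot y = \tilde f(y)$ with $y(0) = x_0$; since $\tilde f(y(t)) = f(y(t))$ whenever $y(t) \in C$, it suffices to show $y(t) \in C$ for all $t$ in the interval of existence. Let $\psi(t) := d_C(y(t))$. On any open interval where $y(t) \notin C$, the function $d_C$ is $C^1$ at $y(t)$ and its gradient is the outer unit normal at $P_C(y(t)) \in \partial C$; hence
\[
\psi'(t) = \biggl\langle \frac{y(t) - P_C(y(t))}{\|y(t) - P_C(y(t))\|},\, f(P_C(y(t))) \biggr\rangle \leq 0
\]
by Step~1. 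Since $\psi(0) = 0$ and $\psi \geq 0$, a standard exit-time argument forces $\psi \equiv 0$, so $y(t) \in C$ throughout.

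\emph{Main obstacle.} The substantive work lies in Step~1: making precise the convex-analytic claim that the normal cone at a non-smooth boundary point of a convex body is the closed conic hull of limit directions of unique normals at approaching smooth points. This is a standard but delicate ingredient, and it is precisely where the hypothesis $\mathrm{int\,}C \neq \emptyset$---absent in Theorem~\ref{th-cm}---genuinely enters, as it guarantees both the density of smooth boundary points and the generating property of their limit normals that drive the reduction.
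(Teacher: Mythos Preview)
Your argument is correct, but it follows a genuinely different route from the one underlying the paper. The paper does not give a self-contained proof of Theorem~\ref{th-fz} (it is quoted from \cite[Corollary~3]{FeZa-1988}); however, the method behind that result is the approximation scheme made explicit in the proof of Theorem~\ref{th-cm1} and in Theorem~\ref{th-3.1}: one fixes $w_{0}\in\mathrm{int}\,C$, notes that $\langle w_{0}-x,\nu(x)\rangle\leq -r<0$ on $\partial C$, replaces $f$ by a locally Lipschitz perturbation $f_{n}(x)=g_{n}(x)+\tfrac{1}{n}(w_{0}-x)$ so that the \emph{strict} inequality $\langle f_{n}(x),\nu(x)\rangle<0$ holds along the \emph{given} selection $\nu$, and then obtains strong flow-invariance for $f_{n}$ via the bounding functions $V_{u}(x)=\langle x-u,\nu(u)\rangle$ (cf.~Theorem~\ref{th-3.1}); weak invariance for $f$ follows by passing to the limit. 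Your route instead upgrades the single-selection hypothesis to the full Nagumo condition through the convex-analytic fact that the normal cone at a non-smooth boundary point is generated by limits of the unique normals at nearby smooth points, and then invokes the classical distance-function argument. This is more direct for convex bodies and produces a stronger intermediate conclusion (namely that \eqref{eq-cm} actually holds for \emph{every} outer normal), but it relies precisely on the structure of convex normal cones. The paper's approximation scheme avoids that ingredient entirely and, crucially, is not tied to convexity: it is exactly the mechanism that drives the general bound-set results of Section~\ref{section-3} (Theorem~\ref{th-main}, Theorem~\ref{th-starshaped}), where your density/limit argument has no counterpart.
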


According to this result, we have that $(H_{\textrm{inv}})$ is satisfied, if $(H_{\textrm{uniq}})$ holds.

Our aim is to show that, under the same assumptions of Theorem~\ref{th-fz} (and without invoking the uniqueness of the solutions for the initial value problems), one can also conclude the existence of an equilibrium point in $C$.
This will follow from a direct application of the Brouwer fixed point theorem. Namely, the following result holds.

\begin{theorem}\label{th-cm1}
Let $C \subset \mathbb{R}^{N}$ be a nonempty, closed, bounded and convex set with nonempty interior, and $f \colon C \to  \mathbb{R}^{N}$ be a continuous function. Suppose that there exists an outer normal field $\nu \colon \partial C \to \mathbb{S}^{N-1}$,
such that condition \eqref{eq-cm} holds. Then, the differential equation \eqref{eq-2.1} has a constant solution in $C$.
\end{theorem}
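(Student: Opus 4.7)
My plan is to reduce to the situation covered by Corollary~\ref{cor-2.1} via a two-step approximation: smooth out $f$ to restore uniqueness, and simultaneously add a small inward perturbation to preserve (and strengthen) the boundary inequality. The existence of an outer normal selection $\nu$ satisfying $\langle f(x),\nu(x)\rangle\le 0$ together with the hypothesis $\mathrm{int\,}C\neq\emptyset$ gives us exactly enough room to carry out the perturbation uniformly, without needing that \eqref{eq-cm} hold for every outer normal.

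Concretely, fix $x_{0}\in \mathrm{int\,}C$ and pick $\delta>0$ with $B[x_{0},\delta]\subset C$. Since each $\nu(x)$ is an outer normal to the convex set $C$ at $x\in\partial C$, the supporting half-space property applied to the interior ball yields the uniform estimate
\begin{equation*}
\langle x-x_{0},\nu(x)\rangle \geq \delta, \quad \text{for all } x\in \partial C.
\end{equation*}
Next, I would extend $f$ continuously to all of $\mathbb{R}^{N}$ (Tietze) and choose a sequence of locally Lipschitz maps $f_{n}\colon \mathbb{R}^{N}\to\mathbb{R}^{N}$ with $\|f_{n}-f\|_{\infty,C}\to 0$; this is standard via mollification. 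Define the perturbed field
\begin{equation*}
g_{n}(x) := f_{n}(x) - \varepsilon_{n}(x-x_{0}),
\end{equation*}
choosing $\varepsilon_{n}\searrow 0$ slowly enough that $\|f_{n}-f\|_{\infty,C}\leq \varepsilon_{n}\delta/2$. For $x\in\partial C$ this gives
\begin{equation*}
\langle g_{n}(x),\nu(x)\rangle \leq \langle f(x),\nu(x)\rangle + \|f_{n}-f\|_{\infty,C} - \varepsilon_{n}\delta \leq -\tfrac{\varepsilon_{n}\delta}{2} < 0,
\end{equation*}
so $g_{n}$ satisfies the hypothesis of Theorem~\ref{th-fz} with the same outer normal selection $\nu$.

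At this point each $g_{n}$ is locally Lipschitz, hence Cauchy problems for $\dot y=g_{n}(y)$ have unique solutions, so the \emph{weak} invariance supplied by Theorem~\ref{th-fz} upgrades automatically to the positive invariance $(H_{\mathrm{inv}})$ of the restriction of the flow to $C$. Since $C$ is compact and convex, hence has the (FPP), Corollary~\ref{cor-2.1} applied to $g_{n}$ produces $z_{n}\in C$ with $g_{n}(z_{n})=0$, i.e.\ $f_{n}(z_{n})=\varepsilon_{n}(z_{n}-x_{0})$. Extracting a subsequence $z_{n}\to z^{*}\in C$ by compactness, the right-hand side tends to $0$ (since $\varepsilon_{n}\to 0$ and $C$ is bounded), while $f_{n}(z_{n})\to f(z^{*})$ because of uniform convergence $f_{n}\to f$ on $C$ and continuity of $f$. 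Hence $f(z^{*})=0$, so $y(t)\equiv z^{*}$ is the required constant solution.

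The step I expect to need the most care is the uniform estimate $\langle x-x_{0},\nu(x)\rangle\geq \delta$: it is the one place where the hypothesis $\mathrm{int\,}C\neq\emptyset$ is truly used and it is what makes the inward perturbation $-\varepsilon_{n}(x-x_{0})$ strictly improve the boundary inequality for the \emph{given} selection $\nu$, without having to control other outer normals. Everything else (the Lipschitz approximation and the passage to the limit) is routine once the scales $\|f_{n}-f\|_{\infty,C}\ll \varepsilon_{n}\delta$ are coupled correctly.
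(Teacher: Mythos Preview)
Your argument is correct and is essentially the same as the paper's: fix an interior point, use the supporting half-space inequality to get the uniform bound $\langle x-x_{0},\nu(x)\rangle\geq\delta$, approximate $f$ by locally Lipschitz maps, add the inward drift $-\varepsilon_{n}(x-x_{0})$ so that the boundary inequality persists for the given selection $\nu$, apply Theorem~\ref{th-fz} plus uniqueness to invoke Corollary~\ref{cor-2.1}, and pass to the limit by compactness. The only cosmetic differences are notation (the paper swaps the names $f_{n}$ and $g_{n}$ and uses $\varepsilon_{n}=1/n$) and that the paper is content with the non-strict inequality $\leq 0$ after perturbation, which already suffices for Theorem~\ref{th-fz}.
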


\begin{proof}
We follow an approximation scheme already applied in the proof of \cite[Corollary~3]{FeZa-1988}. Without loss of generality, we can assume that $f \colon \mathbb{R}^{N}\to  \mathbb{R}^{N}$ is continuous. Let $w_{0}\in \mathrm{int\,}C$ and $r>0$
such that $B[w_{0},r]\subset C$. For each $x\in \partial C$ and $\nu(x)\in \mathbb{S}^{N-1}$, we have that $w_{0} + r\nu(x)\in B[w_{0},r]$; therefore $\langle w_{0} + r\nu(x) -x,\nu(x)\rangle \leq 0$ and so $\langle w_{0}  -x,\nu(x)\rangle \leq -r \|\nu(x)\|^{2}\leq -r$.
Next, we consider a sequence of locally Lipschitz vector fields $g_{n} \colon \mathbb{R}^{N}\to  \mathbb{R}^{N}$ converging uniformly to $f$ to compact sets. Hence, passing if necessary to a subsequence, we can suppose that $\|g_{n}(x)-f(x)\|\leq \frac{r}{n}$, for every $x\in C$. Finally, we define the vector fields
\begin{equation*}
f_{n}(x):= g_{n}(x) + \dfrac{1}{n}(w_{0}-x), \quad x\in \mathbb{R}^{N},
\end{equation*}
and observe that, for the given outer normal field $\nu$,
\begin{align*}
\langle f_{n}(x),\nu(x)\rangle &= \langle f(x),\nu(x)\rangle
+ \langle g_{n}(x) - f(x),\nu(x)\rangle + \frac{1}{n}\langle w_{0}-x,\nu(x)\rangle
\\
&\leq 0 + \|g_{n}(x)-f(x)\| - \frac{r}{n} \leq 0, \quad \text{for every $x\in \partial C$.}
\end{align*}
Applying Theorem~\ref{th-fz} to the differential systems $\dot{y}= f_{n}(y)$ for which we have the uniqueness of the solutions for the initial value problems, we enter in the setting of Corollary~\ref{cor-2.1} and, therefore, for each $n$ there exists at least a $z_{n}\in C$ such that $f_{n}(z_{n})=0$. Finally, by the compactness of $C$ we have that a subsequence $z_{k_{n}}$ converges to a point $z^{*}\in C$ and $f(z^{*})=0$, since $(f_{n})_{n}$ converges to $f$ uniformly in $C$.
\end{proof}

Similar results were previously obtained by Feuer and Heymann \cite[Theorem~3.3]{FeHe-1976}
and by Heymann and Stern in
\cite[Theorem~2.1]{HeSt-1976} for the autonomous control system
$\dot{y} = f(y,w)$ (with $w\in \mathbb{R}^{m}$ a control parameter),
using fixed point theorems for multivalued mappings by
Kakutani \cite{Ka-1941} and
Browder \cite{Br-1968}, respectively. However, we point out that
in \cite{FeHe-1976,HeSt-1976}
it is assumed that $f=f(y,w)$ is continuously differentiable in
its first variable while, in our case, only the continuity of $f=f(y)$
is required. Moreover, in our approach, as well as in \cite{CiMa-2021},
the results are obtained by a simple application of the Brouwer fixed point theorem,
without invoking more advanced fixed point theorems on multivalued mappings.
Incidentally, Theorem~\ref{th-cm1} is also connected to a classical
result by Gustafson and Schmitt \cite[Theorem~1]{GuSc-1974}
(for non-autonomous delay-differential equations),
which was stated with strict inequalities in \eqref{eq-cm}. Recent
extensions to first order systems with nonlocal boundary conditions
have been obtained by Mawhin and Szyma\'{n}ska-D\k{e}bowska in \cite{MaSz-2017},
see also \cite[Theorem~1.2]{MaSz-2017b} and \cite[Theorem~2.12]{MaSz-2019}.
These results exploit the topological degree theory.

\begin{remark}\label{rem-2.2}
Theorem~\ref{th-cm} and Theorem~\ref{th-cm1}, although quite similar in the assumptions, are independent. Indeed, if the interior of the convex set $C$ is empty, we cannot replace the condition \textit{for each outer normal field} with \textit{there exists an outer normal field}, as the following elementary planar example shows.

Consider the system $\dot{x}_{1} = 1$, $\dot{x}_{2} = -x_{2}$ (without equilibrium points) and the convex set
$C:=\mathopen{[}1,2\mathclose{]} \times \{0\}\subset \mathbb{R}^{2}$. Notice that $\partial C= C$. For each $x=(x_{1},0)\in \partial C= C$, we take $\nu(x)=(0,1)$ and observe that $\partial C\ni x\mapsto  \nu(x) \in \mathbb{S}^{1}$ is a normal outer field for the set $C$, because, $\langle u - x, \nu(x) \rangle \leq 0$ $(=0)$ for all $x\in \partial C$ and $u\in C$. Moreover, \eqref{eq-cm} is satisfied (the inner product is identically zero) for this particular normal outer field and the map $f \colon (x_{1},x_{2})\mapsto (1,-x_{2})$.

Another (even simpler) example was suggested by Prof.~Mawhin (private communication): consider $f \colon (x_{1},x_{2})\mapsto (1,0)$, $C=\mathopen{[}-1,1\mathclose{]}\times\{0\}$ and $\nu(x)=(0,1)$ at every point of $C=\partial C$.
\hfill$\lhd$
\end{remark}

\section{Positively invariant sets and geometric conditions at the boundary}\label{section-3}

Throughout this section we suppose that $f=f(t,x) \colon \mathopen{[}0,T\mathclose{]}\times \Omega \to \mathbb{R}^{N}$ is a continuous (non-autonomous) vector field with $\Omega \subset \mathbb{R}^{N}$ a nonempty open set and $G$ is an open and bounded set with $\overline{G}\subset \Omega$. Unless explicitly stated, we do not assume the uniqueness of the solutions for initial value problems associated with
\begin{equation}\label{eq-3.1}
\dot{y} = f(t,y).
\end{equation}
When we consider a solution of \eqref{eq-3.1}, we implicitly assume that it is \textit{non-continuable}, i.e.~defined on a maximal interval of existence. Our aim is to prove the existence of a $T$-periodic solution of \eqref{eq-3.1} in $\overline{G}$, namely a solution satisfying the $T$-periodic boundary condition $y(0) = y(T)$, and such that $y(t)\in \overline{G}$ for all $t\in \mathopen{[}0,T\mathclose{]}$. When this kind of solutions exists, then in the particular case when the vector field is autonomous, we have also equilibrium points in the set $\overline{G}$. This is essentially Lemma~\ref{lem-2.1} in the context of the autonomous system \eqref{eq-2.1} and without the assumption of uniqueness for the Cauchy problems. Indeed, we have the following.

\begin{lemma}\label{lem-3.1}
Let $f \colon \Omega \to \mathbb{R}^{N}$ be a continuous vector field. Assume that there exists a sequence of $T_{k}$-periodic solutions to \eqref{eq-2.1} with values in $\overline{G}$ and with $T_{k}\searrow 0$. Then, there exists $z^{*}\in \overline{G}$ with $f(z^{*})=0$.
\end{lemma}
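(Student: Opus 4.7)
The plan is to argue directly by a compactness plus averaging argument, mimicking the proof of Lemma~\ref{lem-2.1} but without invoking a dynamical system (since uniqueness of Cauchy problems for \eqref{eq-2.1} is not assumed here).

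First, I would select $z_{k}:=y_{k}(0)\in\overline{G}$ from each given periodic solution. Since $\overline{G}$ is compact in $\mathbb{R}^{N}$ (closed and bounded), the sequence $(z_{k})_{k}$ admits a convergent subsequence; after relabeling I may assume $z_{k}\to z^{*}\in\overline{G}$.

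Next, set $M:=\max_{x\in\overline{G}}\|f(x)\|$, which is finite by the continuity of $f$ and the compactness of $\overline{G}$. Because each $y_{k}$ solves $\dot{y}=f(y)$ with values in $\overline{G}$, it is $M$-Lipschitz, so $\|y_{k}(t)-z_{k}\|\leq M T_{k}$ for every $t\in\mathopen{[}0,T_{k}\mathclose{]}$. Combining this with $z_{k}\to z^{*}$ and $T_{k}\searrow 0$ gives $y_{k}(t)\to z^{*}$ uniformly in $t\in\mathopen{[}0,T_{k}\mathclose{]}$. Since $f$ is uniformly continuous on $\overline{G}$, I then get $f(y_{k}(t))\to f(z^{*})$ uniformly in $t\in\mathopen{[}0,T_{k}\mathclose{]}$.

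The final step exploits $T_{k}$-periodicity: integrating $\dot{y}_{k}=f(y_{k})$ over $\mathopen{[}0,T_{k}\mathclose{]}$ and using $y_{k}(T_{k})=y_{k}(0)$ yields
\begin{equation*}
0 \;=\; y_{k}(T_{k})-y_{k}(0) \;=\; \int_{0}^{T_{k}} f(y_{k}(s))\,ds,
\end{equation*}
so that $\frac{1}{T_{k}}\int_{0}^{T_{k}} f(y_{k}(s))\,ds=0$. Writing this as $f(z^{*})+\frac{1}{T_{k}}\int_{0}^{T_{k}}[f(y_{k}(s))-f(z^{*})]\,ds=0$ and letting $k\to\infty$, the uniform convergence established above sends the integral term to $0$, giving $f(z^{*})=0$. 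There is no real obstacle: the only subtlety is securing the uniform estimate on the time interval shrinking to a point, which is handled by the Lipschitz bound $M T_{k}$ derived from the a priori bound on $\|f\|$ over $\overline{G}$.
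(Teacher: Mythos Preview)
Your proof is correct and follows essentially the same approach as the paper's: extract a convergent subsequence of initial values by compactness of $\overline{G}$, use the bound $\|y_k(t)-y_k(0)\|\leq T_k\max_{\overline{G}}\|f\|$, and exploit periodicity via $\int_0^{T_k}f(y_k(s))\,ds=0$ to conclude $f(z^*)=0$. The only cosmetic difference is in the final passage to the limit: the paper pairs the averaged integral with an arbitrary fixed vector $\nu$ and applies the scalar mean value theorem to obtain a point $\tilde{t}_k$ with $\langle f(\zeta_k(\tilde{t}_k)),\nu\rangle=0$, then lets $k\to\infty$; you instead estimate the vector-valued average directly via uniform continuity of $f$ on $\overline{G}$. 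Both variants are equally elementary and yield the same conclusion.
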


\begin{proof}
A proof based on Ascoli--Arzel\`{a} theorem can be found in \cite[Section~4]{CiMa-2021}. We provide an alternative one as follows. Let $\zeta_{k}$ be a sequence of $T_{k}$-periodic solutions of \eqref{eq-2.1} with $\zeta_{k}(t)\in \overline{G}$ for all $t$. By the compactness of $\overline{G}$ we can suppose, without loss of generality, that there exists a point $z^{*}\in \overline{G}$ such that
$\zeta_{k}(0)\to z^{*}$. Observe also that $\|\zeta_{k}(t)-\zeta_{k}(0)\|\leq T_{k} K$ for all $t\in \mathopen{[}0,T_{k}\mathclose{]}$, where $K\geq \max\{\|f(w)\| \colon w\in \overline{G}\}$. Next, we observe that, for any fixed vector $\nu\in \mathbb{R}^{N}$, it holds that
\begin{equation*}
0 = \frac{1}{T_{k}}\langle \zeta_{k}(T_{k})-\zeta_{k}(0), \nu\rangle  =
\frac{1}{T_{k}} \int_{0}^{T_{k}} \langle f(\zeta_{k}(t)), \nu\rangle \,\mathrm{d}t
= \langle f(\zeta_{k}(\tilde{t}_{k})), \nu\rangle,
\end{equation*}
where in the last equality we have applied the mean value theorem.
From $\|\zeta_{k}(\tilde{t}_{k}) - z^{*}\|\leq T_{k} K + \|\zeta_{k}(0) - z^{*}\|\to 0$ and the continuity of $f$, we find that $\langle f(z^{*}),\nu\rangle = 0$ and, finally, we conclude that $f(z^{*})=0$, since the nullity of the inner product holds for an arbitrary vector $\nu$.
\end{proof}

The existence of $T$-periodic solutions with values in $\overline{G}$ for the non-autonomous system \eqref{eq-3.1} will
be proved as a consequence of the Brouwer fixed point theorem and some geometric conditions at the boundary of $G$ which are related (but not included) to some analogous ones considered in the theory of positively invariant sets.
In more detail, following \cite{FeZa-1988} and the theory of \textit{bound sets} \cite{GaMa-1977, Ma-1979}, we consider open and bounded sets whose boundary if
determined by a family of \textit{bounding functions} $(V_{u})_{u\in \partial G}$
with the following property: for each $u\in \partial G$ there exist a radius $r_{u}>0$
and $\mathcal{C}^{1}$-function $V_{u} \colon B(u,r_{u})\to \mathbb{R}$ such that $V_{u}(u)=0$
and $G\cap B(u,r_{u}) \subset \{x\in B(u,r_{u}) \colon V_{u}(x)< 0\}$.
This definition is a slight modification of \cite[Definition~2.1]{FeZa-2021}. For simplicity of the exposition,
throughout the paper we confine ourselves to the case of smooth (i.e.~$\mathcal{C}^{1}$) bounding functions.
The theory can be extended to the case of locally Lipschitz functions, too (cf.~\cite{Ta-2000,Za-1987TS}), but we do not consider here this case which is beyond the scope of this article.

The next theorem summarizes some results previously obtained in \cite{Ma-1977}
(for the \textit{attractive bound sets}) and in \cite{FeZa-1988}
(for \textit{strongly flow-invariant sets}).

\begin{theorem}\label{th-3.1}
Suppose that there exists a family $(V_{u})_{u\in \partial G}$ of bounding functions for $G$ such that for each $u\in \partial G$ there exists $\rho_{u} \in\mathopen{]}0, r_{u}\mathclose{]}$ such that
\begin{equation}\label{eq-3.2}
\langle f(t,x), \nabla V_{u}(x)\rangle \leq 0,
\quad \text{for all $x\in G\cap B(u,\rho_{u})$ and $t\in \mathopen{[}0,T\mathclose{]}$.}
\end{equation}
Then,
\begin{itemize}
\item[$(i)$] each solution of \eqref{eq-3.1} with $y(0)\in G$ is defined on $\mathopen{[}0,T\mathclose{]}$
and such that $y(t)\in G$ for all $t\in \mathopen{[}0,T\mathclose{]}$
(\textit{strong flow invariance for $G$});
\item[$(ii)$] for each $x_{0}\in \partial G$, there exists at least one solution of \eqref{eq-3.1} defined on $\mathopen{[}0,T\mathclose{]}$ such that $y(0)=x_{0}$ and $y(t)\in \overline{G}$ for all $t\in \mathopen{[}0,T\mathclose{]}$
(\textit{weak flow invariance for $\overline{G}$});
\item[$(iii)$] if $\overline{G}$ is homeomorphic to a closed ball of $\mathbb{R}^{N}$ and we assume the uniqueness for the Cauchy problems associated with \eqref{eq-3.1} with initial value in $\overline{G}$, then system \eqref{eq-3.1} has a $T$-period solution in $\overline{G}$.
\end{itemize}
\end{theorem}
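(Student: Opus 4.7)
The plan is to treat the three claims in turn, with (i) as the analytic heart and (ii), (iii) following by approximation and the Brouwer fixed point theorem. For (i), I fix a non-continuable solution $y$ with $y(0)\in G$ and suppose, for contradiction, that $y(t)\notin G$ for some positive $t$ in its right maximal interval. Set $t_{1}:=\inf\{t>0:y(t)\notin G\}$; continuity of $y$ together with the closedness of $\mathbb{R}^{N}\setminus G$ force $t_{1}>0$, $y(t)\in G$ for all $t\in[0,t_{1})$, and $u:=y(t_{1})\in\partial G$. Introduce the auxiliary scalar $h(t):=V_{u}(y(t))$, which is well-defined on a left neighborhood of $t_{1}$ since $y(t)\to u$ forces $y(t)\in G\cap B(u,\rho_{u})$ eventually. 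By the inclusion $G\cap B(u,r_{u})\subset\{V_{u}<0\}$ we have $h(t)<0$ on that neighborhood, while $h(t_{1})=V_{u}(u)=0$; moreover the chain rule and \eqref{eq-3.2} give $h'(t)=\langle f(t,y(t)),\nabla V_{u}(y(t))\rangle\leq 0$. Hence $h$ is nonincreasing near $t_{1}^{-}$, so $h(t)\geq h(t_{1})=0$, contradicting $h(t)<0$. Therefore $y(t)\in G$ throughout the right maximal interval, and since $\overline{G}$ is a compact subset of $\Omega$, the standard escape-time argument forces this interval to contain $[0,T]$.

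For (ii), given $x_{0}\in\partial G$, choose $x_{n}\in G$ with $x_{n}\to x_{0}$ and, for each $n$, any Peano solution $y_{n}$ of \eqref{eq-3.1} with $y_{n}(0)=x_{n}$. By (i), every such $y_{n}$ is defined on $[0,T]$ with $y_{n}([0,T])\subset G$. Because $f$ is bounded on the compact cylinder $[0,T]\times\overline{G}$, the family $\{y_{n}\}$ is uniformly bounded and equicontinuous, so Ascoli--Arzel\`a yields a subsequence converging uniformly to some $y\in C([0,T];\overline{G})$. Passing to the limit in the integral identity $y_{n}(t)=x_{n}+\int_{0}^{t}f(s,y_{n}(s))\,\mathrm{d}s$ identifies $y$ as a solution of \eqref{eq-3.1} with $y(0)=x_{0}$ and $y(t)\in\overline{G}$ for all $t\in[0,T]$.

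For (iii), the hypothesis $(H_{\textrm{uniq}})$ together with (i) and (ii) allows one to define the Poincar\'e operator $\Phi\colon\overline{G}\to\overline{G}$, $\Phi(x_{0}):=y(T;x_{0})$, where $y(\,\cdot\,;x_{0})$ is the unique solution starting at $x_{0}$: if $x_{0}\in G$, then $y(\,\cdot\,;x_{0})$ stays in $G$ by (i); if $x_{0}\in\partial G$, the weak solution produced by (ii) must coincide with $y(\,\cdot\,;x_{0})$ by uniqueness and so lies in $\overline{G}$ throughout $[0,T]$. Standard continuous dependence on initial data, available under $(H_{\textrm{uniq}})$ on the compact time interval where solutions stay in $\overline{G}$, renders $\Phi$ continuous; since $\overline{G}$ is homeomorphic to a closed ball, Lemma~\ref{rem-2.1} supplies a fixed point $x^{*}=\Phi(x^{*})$, which is the initial value of the required $T$-periodic solution.

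I expect the main obstacle to be the boundary analysis in (i): one must exploit that \eqref{eq-3.2} is postulated on the full interior slice $G\cap B(u,\rho_{u})$ rather than only at points of $\partial G$, so that the monotonicity of $h$ can be propagated up to the exit time $t_{1}$ itself, and then combine this with the strict negativity of $V_{u}$ inside $G\cap B(u,r_{u})$ to produce the contradiction. Once (i) is established, the passage (ii) is essentially a routine Ascoli--Arzel\`a argument and (iii) is an immediate application of the Brouwer fixed point theorem via Lemma~\ref{rem-2.1}.
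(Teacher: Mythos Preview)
Your proof is correct and follows essentially the same route as the paper's: the contradiction argument for (i) via the monotone auxiliary function $h(t)=V_{u}(y(t))$, the approximation from interior points combined with a compactness/convergence argument for (ii), and the Brouwer fixed point theorem applied to the Poincar\'{e} map for (iii). The only cosmetic differences are that you spell out the Ascoli--Arzel\`{a} step and the passage to the limit in the integral equation where the paper simply cites Hartman's convergence theorem, and that you separate the escape-time argument from the boundary contradiction in (i) slightly more explicitly than the paper does.
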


\begin{proof}
Let $y$ be an arbitrary solution of \eqref{eq-3.1}, with $y(0)\in G$. We claim that the solution is defined on the whole
interval $\mathopen{[}0,T\mathclose{]}$ and, moreover, $y(t)\in G$, for all $t\in \mathopen{[}0,T\mathclose{]}$.
If, by contradiction, this is not true, then there exists a time $\hat{t}\in \mathopen{]}0,T\mathclose{]}$ such that $y(t)\in G$, for all $t\in \mathopen{[}0,\hat{t}\mathclose{[}$ and $y(\hat{t})=:u\in \partial G$. Let $\varepsilon\in\mathopen{]}0,\hat{t}\mathclose{[}$
be such that $y(t)\in G\cap B(u,r_{u})$, for all $t\in \mathopen{[} \hat{t}- \varepsilon,\hat{t} \mathclose{[}$, and consider the auxiliary function $\eta(t):=V_{u}(y(t))$ for $t\in \mathopen{[}\hat{t}- \varepsilon,\hat{t}\mathclose{]}$. We observe that $\eta(t)< 0$ for all $t\in [\hat{t}- \varepsilon,\hat{t}[$ and $\eta(\hat{t})=0$.
On the other hand, for all $t\in \mathopen{[}\hat{t}- \varepsilon,\hat{t}\mathclose{]}$, it holds that $\eta'(t) = \langle \dot{y}(t), \nabla V_{u}(y(t))\rangle = \langle f(t,y(t)), \nabla V_{u}(y(t))\rangle \leq 0$, according to \eqref{eq-3.2}, so that $0= \eta(\hat{t}) \leq \eta(\hat{t}- \varepsilon) < 0$ and
a contradiction is obtained.

Having proved $(i)$, we immediately obtain $(ii)$ as follows.
If $u\in \partial G$, then there exists a sequence of points $z_{n}\in G$, with $z_{n}\to u$. For each $z_{n}$, let
$y_{n}$ be any solution of \eqref{eq-3.1} with $y_{n}(0)=z_{n}$, with $y_{n}$ defined on $\mathopen{[}0,T\mathclose{]}$ and such that $y_{n}(t)\in G$, for all $t\in \mathopen{[}0,T\mathclose{]}$. Then \cite[Theorem~7.2]{Ha-1972} ensures the existence of a subsequence $(y_{k_{n}})_{n}$ of $(y_{n})_{n}$ with $(y_{k_{n}})_{n}$ converging (uniformly on $\mathopen{[}0,T\mathclose{]}$) to a solution $\tilde{y}$ of \eqref{eq-3.1} with $\tilde{y}$ defined on $\mathopen{[}0,T\mathclose{]}$. We conclude that $\tilde{y}(t)\in \overline{G}$ for all $t\in \mathopen{[}0,T\mathclose{]}$, because $y_{k_{n}}(t)\to \tilde{y}(t)$, with $y_{k_{n}}(t)\in G$.

If we further assume the uniqueness of the solutions for the Cauchy problems associated with \eqref{eq-3.1} with initial value in $\overline{G}$,
we have that the Poincar\'{e} map $\Phi_{0}^{T}$ is well defined on $\overline{G}$ and maps $\overline{G}$ into itself (as a consequence of $(i)$ and $(ii)$). The Brouwer fixed point theorem guarantees the existence
of a fixed point $z^{*}\in \overline{G}$ for $\Phi_{0}^{T}$
and hence the existence of a $T$-periodic solution $\hat{y}$ of \eqref{eq-3.1}
with $\hat{y}(0)=z^{*}$ and such that $\hat{y}(t)\in \overline{G}$ for all $t\in \mathopen{[}0,T\mathclose{]}$ (by $(i)$ and $(ii)$). Thus also $(iii)$ is proved.
\end{proof}

Clearly, a sufficient condition for the validity of \eqref{eq-3.2} is to assume
\begin{equation}\label{eq-3.3}
\langle f(t,u), \nabla V_{u}(u)\rangle < 0,
\quad \text{for all $u\in \partial G$ and $t\in \mathopen{[}0,T\mathclose{]}$,}
\end{equation}
(see \cite[Theorem~7.4]{Ma-1977}). In this case, we just restrict
each $V_{u}$ to a smaller ball $B(u,r'_{u})\subset B(u,r_{u})$
(if necessary), and \eqref{eq-3.2} is satisfied.

In general, we cannot relax condition \eqref{eq-3.3} to
\begin{equation}\label{eq-3.4}
\langle f(t,u), \nabla V_{u}(u)\rangle \leq 0,
\quad \text{for all $u\in \partial G$ and $t\in \mathopen{[}0,T\mathclose{]}$,}
\end{equation}
even if we require the uniqueness of the solutions for the initial value problems,
as it can be seen by the following elementary example.

\begin{example}\label{ex-3.1}
Consider the one-dimensional system $\dot{y}=1$
and the set $G:=\{x \in \mathbb{R} \colon V(x)<0 \}$, with $V(x):= (x+1)(x-1)^{3}$.
We have $G=\mathopen{]}-1,1\mathclose{[}$, and $V_{-1} = V_{1} = V$,
with \eqref{eq-3.4} satisfied.
All the conclusions in Theorem~\ref{th-3.1} fail. The problem here is due to the fact that $\nabla V_{u}(u)=0$
at the point $u=1\in \partial G$.
\hfill$\lhd$
\end{example}

\subsection{A general existence result for $T$-periodic solutions}\label{section-3.1}

In view of Example~\ref{ex-3.1}, a natural assumption to improve the hypotheses \eqref{eq-3.2} or \eqref{eq-3.3} to \eqref{eq-3.4} is to add the condition
\begin{equation}\label{hp-non-deg}
\nabla V_{u}(u)\neq 0, \quad \text{for all $u\in \partial G$.}
\end{equation}
Actually, it will be convenient to suppose the following condition: there are constants $0 < \eta^- \leq \eta^+$ such that
\begin{equation}\label{hp-non-deg*}
\eta^-\leq \|\nabla V_{u}(u)\|\leq \eta^+, \quad \text{for all $u\in \partial G$.}
\end{equation}
When assumption \eqref{hp-non-deg*}
is required, we say that $(V_{u})_{u\in\partial G}$ is a
family of \textit{non-degenerate} bounding functions.
Observe that if \eqref{hp-non-deg} is satisfied, we can always enter in condition \eqref{hp-non-deg*}, by passing to the new family $(W_{u})_{u\in\partial G}$ with $W_{u}(x):= V_{u}(x)/\|\nabla V_{u}(u)\|$. For the new bounding functions, condition \eqref{hp-non-deg*} is satisfied with $\eta^- = \eta^+ =1$.

Our main result in this direction is the following.

\begin{theorem}\label{th-main}
Let $(V_{u})_{u\in \partial G}$ be a family of non-degenerate bounding functions for $G$ such that \eqref{eq-3.4}
holds and suppose that there exists a continuous function
$g \colon \partial{G}\to \mathbb{R}^{N}$ such that
\begin{equation}\label{cond-th-main}
\sup_{u\in \partial G}\langle g(u), \nabla V_{u}(u)\rangle < 0.
\end{equation}
If $\overline{G}$ is homeomorphic to a closed ball of $\mathbb{R}^{N}$, then there exists a $T$-periodic solution of \eqref{eq-3.1} with values in $\overline{G}$.
\end{theorem}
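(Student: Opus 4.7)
The plan is to reduce the statement to the strict inwardness setting of Theorem~\ref{th-3.1}(iii) by an approximation argument in the spirit of the proof of Theorem~\ref{th-cm1}. Since $\partial G$ is compact and \eqref{cond-th-main} is strict, set
\[
c := -\sup_{u\in \partial G}\langle g(u), \nabla V_{u}(u)\rangle > 0.
\]
Extend $g$ to a continuous $\tilde g \colon \mathbb{R}^{N}\to \mathbb{R}^{N}$ via Tietze's theorem, and by mollification produce locally Lipschitz maps $\hat g_{n}$ with $\|\hat g_{n} - g\|_{L^{\infty}(\partial G)} \to 0$. Choose also locally Lipschitz approximations $h_{n}(t,x)$ of $f(t,x)$ on $[0,T]\times \overline{G}$ with $\|h_{n}-f\|_{\infty}\to 0$; by reindexing, one may arrange $\|h_{n}-f\|_{\infty}=o(1/n)$.

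Consider the perturbed fields
\[
f_{n}(t,x) := h_{n}(t,x) + \frac{1}{n}\hat g_{n}(x).
\]
Each $f_{n}$ is locally Lipschitz in $x$, so Cauchy problems have unique solutions. For every $u\in \partial G$ and $t\in[0,T]$, using \eqref{eq-3.4} for $f$ together with the upper bound $\|\nabla V_{u}(u)\|\leq \eta^{+}$ from non-degeneracy, a direct computation yields
\[
\langle f_{n}(t,u),\nabla V_{u}(u)\rangle \leq \|h_{n}-f\|_{\infty}\,\eta^{+} \,+\, \frac{\eta^{+}}{n}\|\hat g_{n} - g\|_{L^{\infty}(\partial G)} \,-\, \frac{c}{n},
\]
which is strictly negative for all $n$ sufficiently large. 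Hence the strict inwardness condition \eqref{eq-3.3} holds for $f_{n}$ with respect to the same family $(V_{u})_{u\in \partial G}$; Theorem~\ref{th-3.1}(iii) then applies, producing, for each large $n$, a $T$-periodic solution $y_{n}$ of $\dot y = f_{n}(t,y)$ with $y_{n}(t)\in \overline{G}$ for all $t\in [0,T]$.

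To conclude, the uniform boundedness of $(f_{n})$ on $[0,T]\times \overline{G}$ makes $(\dot y_{n})$ uniformly bounded, so $(y_{n})$ is uniformly bounded and equicontinuous. By Ascoli--Arzel\`a, a subsequence $y_{k_{n}}$ converges uniformly on $[0,T]$ to some continuous $y$ taking values in $\overline{G}$ and satisfying $y(0)=y(T)$. The uniform convergence $f_{n}\to f$, joined with the uniform continuity of $f$ on the compact set $[0,T]\times \overline{G}$, allows passage to the limit in the integral equation $y_{n}(t)=y_{n}(0)+\int_{0}^{t} f_{n}(s,y_{n}(s))\,\mathrm{d}s$, yielding that $y$ solves \eqref{eq-3.1}. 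The delicate point is the quantitative bookkeeping in the inwardness estimate: the approximation error in $h_{n}$ has to be made strictly smaller than $1/n$ so that it is absorbed into the gap $c/n$ created by the auxiliary field $g$. Hypothesis \eqref{cond-th-main}, providing a \emph{uniform} strict negativity of $\langle g,\nabla V_{(\cdot)}\rangle$ on $\partial G$, supplies precisely this quantitative margin and is the mechanism that replaces the missing strict inwardness of $f$ itself.
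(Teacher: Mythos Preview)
Your argument is correct and follows essentially the same route as the paper: perturb $f$ by $\tfrac{1}{n}g$ to force the strict inwardness condition \eqref{eq-3.3}, regularize to obtain Lipschitz vector fields so that Theorem~\ref{th-3.1}(iii) applies, and pass to the limit via Ascoli--Arzel\`a. The only cosmetic difference is that the paper approximates the combined field $f+\tfrac{1}{n}g$ by a single locally Lipschitz $f_{n}$ with error at most $\tfrac{\varepsilon}{2n\eta^{+}}$, whereas you mollify $f$ and $g$ separately and then track two error terms; both bookkeeping schemes yield the same strict inequality for large $n$.
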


\begin{proof}
Using Tietze extension theorem we extend by continuity $g$ to $\overline{G}$.
Let $\varepsilon >0$ be such that $\langle g(u), \nabla V_{u}(u)\rangle \leq - \varepsilon$, for every $u\in
\partial G$. Then, for every positive integer $n$, we consider a locally Lipschitz continuous
function $f_{n} \colon \Omega\times \mathopen{[}0,1\mathclose{]} \to \mathbb{R}^{N}$ such that
\begin{equation*}
\Bigl{\|}f_{n}(t,x) - (f(t,x) + \frac{1}{n}g(x))\Bigr{\|}\leq \frac{\varepsilon}{2n \eta^+},
\quad \text{for every $(t,x)\in \mathopen{[}0,T\mathclose{]}\times \overline{G}$.}
\end{equation*}
We have that
\begin{equation*}
\|f_{n}(t,x) - f(t,x)\| \leq \frac{\varepsilon}{2n\eta^+} +
\frac{1}{n} \max_{x\in \overline{G}} \|g(x)\|,
\quad \text{for every $(t,x)\in \mathopen{[}0,T\mathclose{]}\times \overline{G}$,}
\end{equation*}
so that $f_{n}$ converges uniformly to $f$ on $\mathopen{[}0,T\mathclose{]}\times \overline{G}$.
Moreover, for all $u\in \partial G$ and $t\in \mathopen{[}0,T\mathclose{]}$, we deduce that
\begin{align*}
&\langle f_{n}(t,u), \nabla V_{u}(u)\rangle =
\\
&= \langle f_{n}(t,u) - (f(t,u) + \frac{1}{n} g(u)),\nabla V_{u}(u)\rangle
+ \langle f(t,u) + \frac{1}{n} g(u),\nabla V_{u}(u)\rangle
\\
&\leq \Bigl{\|}f_{n}(t,u) - (f(t,u) + \frac{1}{n} g(u))\Bigr{\|} \|\nabla V_{u}(u)\|
+ \langle f(t,u), \nabla V_{u}(u)\rangle + \frac{1}{n}\langle g(u), \nabla V_{u}(u)\rangle
\\
&\leq \frac{\varepsilon}{2n} + 0 + \frac{-\varepsilon}{n} < 0,
\end{align*}
so that condition \eqref{eq-3.3} is satisfied for $f_{n}$. Hence, we can apply $(iii)$ of Theorem~\ref{th-3.1} to the differential systems
\begin{equation}\label{eq-3.1n}
\dot{y}= f_{n}(t,y)
\end{equation}
and obtain that, for each $n$, there exists a $T$-periodic solution $\tilde{y}_{n}$ to \eqref{eq-3.1n} such that $\tilde{y}_{n}(t)\in \overline{G}$ for all $t\in \mathopen{[}0,T\mathclose{]}$. Using the Ascoli--Arzel\`{a} theorem and the fact that
$f_{n}$ converges uniformly to $f$ on $\mathopen{[}0,T\mathclose{]}\times \overline{G}$, we can conclude that there exists a subsequence $(\tilde{y}_{k_{n}})_{n}$ of $(\tilde{y}_{n})_{n}$ converging uniformly on $\mathopen{[}0,T\mathclose{]}$ to a $T$-periodic solution $\tilde{y}$ of \eqref{eq-3.1} and such that $\tilde{y}(t) \in \overline{G}$ for all $t\in \mathopen{[}0,T\mathclose{]}$.
\end{proof}

We stress the fact that in Theorem~\ref{th-main} we do not require the uniqueness of the solutions for the Cauchy problems. It is immediate to check that Theorem~\ref{th-cm1} can be obtained as a consequence of Theorem~\ref{th-main}, setting $V_{u}(x):= \langle x-u,\nu(u)\rangle$ and $g(x):=w_{0}-x$ with $w_{0}\in \mathrm{int\,}C$ (and using also Lemma~\ref{lem-3.1}).

Another application, following \cite[Theorem~16.9, p.~218]{Am-1990}, can be provided when $V_{u}\equiv V$ for all $u\in \partial G$, namely, when $\overline{G}$ is a sub-level set of a Lyapunov-like function. More precisely, let us suppose that $V \colon\Omega\to \mathbb{R}$ is a continuously differentiable function and let $c\in \mathbb{R}$ be such that $M:=V^{-1}(\mathopen{]}-\infty,c\mathclose{]})$ is a compact set with nonempty interior.
Suppose also that
\begin{equation}\label{eq-nondeg}
\nabla V(u)\neq0, \quad\text{for all } \, u\in \partial M.
\end{equation}
In this situation, $V^{-1}(\mathopen{]}-\infty,c\mathclose{[})\subset \mathrm{int\,}M$
and $\partial M\subset V^{-1}(c)$.
Moreover, $\partial M = \partial(\mathrm{int\,}M)$.
Hence, $G:= \mathrm{int\,}M$ is an open and bounded set
whose boundary is determined by the family of
bounding functions $(V_{u})_{u\in \partial G}$ such that $V_{u}(x):= V(x)-c$, with
$V_{u}$ restricted to an open ball $B(u,r_{u})$ where $\nabla V(x)\neq 0$
(see \cite[p.~520]{FeZa-2021}, for a similar discussion).
Having assumed that $\nabla V(x)\neq0$ for all $x\in \partial G=\partial M$, we have \eqref{hp-non-deg*} satisfied (by the continuity of $\nabla V$) and, moreover, \eqref{cond-th-main} holds for the choice $g(x)=-\nabla V(x)$.
Hence, the following corollary can be given.

\begin{corollary}\label{cor-3.1}
Let $V \colon \Omega\to \mathbb{R}$ be a $\mathcal{C}^{1}$-function and let $c\in \mathbb{R}$
be such that the set $M:=V^{-1}(\mathopen{]}-\infty,c\mathclose{]})$
is compact with nonempty interior.
Assume \eqref{eq-nondeg} and also
\begin{equation}\label{eq-corollary}
\langle f(t,u), \nabla V(u)\rangle \leq 0,
\quad \text{for all $u\in \partial M$ and $t\in \mathopen{[}0,T\mathclose{]}$.}
\end{equation}
If $M$ is homeomorphic to a closed ball of $\mathbb{R}^{N}$, then there exists a $T$-periodic solution of \eqref{eq-3.1} with values in $M$.
\end{corollary}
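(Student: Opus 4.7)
The plan is to deduce Corollary~\ref{cor-3.1} directly from Theorem~\ref{th-main} by producing a family of non-degenerate bounding functions and an auxiliary field $g$ satisfying \eqref{cond-th-main}, following the paragraph preceding the statement.

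First I would set $G := \mathrm{int\,}M$. Since $M$ is compact with nonempty interior and $\nabla V(u)\neq 0$ on $\partial M$, the implicit function theorem applied around each $u\in\partial M$ shows locally that $\{V<c\}$ has $u$ as a boundary point and that $\partial M = \partial G \subset V^{-1}(c)$, while $V^{-1}((-\infty,c))\subset G$. In particular $\overline{G}=M$ is compact, contained in $\Omega$, and homeomorphic to a closed ball of $\mathbb{R}^N$ by hypothesis, so the topological requirement of Theorem~\ref{th-main} is in place.

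Next I would build the bounding functions. For each $u\in\partial G$, by continuity of $\nabla V$ there is a radius $r_u>0$ such that $\nabla V(x)\neq 0$ on $B(u,r_u)\subset\Omega$; define
\begin{equation*}
V_u \colon B(u,r_u)\to\mathbb{R}, \qquad V_u(x) := V(x)-c.
\end{equation*}
Then $V_u(u)=0$ and $V_u<0$ on $G\cap B(u,r_u)$, so $(V_u)_{u\in\partial G}$ is a family of $\mathcal{C}^1$ bounding functions for $G$. Since $\partial G=\partial M$ is compact and $\nabla V$ is continuous and nonvanishing there, the quantities
\begin{equation*}
\eta^- := \min_{u\in\partial G}\|\nabla V(u)\|, \qquad \eta^+ := \max_{u\in\partial G}\|\nabla V(u)\|
\end{equation*}
satisfy $0<\eta^-\leq\eta^+$, so \eqref{hp-non-deg*} holds (the family is non-degenerate). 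Assumption \eqref{eq-corollary} reads exactly as \eqref{eq-3.4} for these bounding functions, because $\nabla V_u(u)=\nabla V(u)$.

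Finally I would set $g(x) := -\nabla V(x)$, which is continuous on $\partial G$ (indeed on all of $\Omega$). Then for every $u\in\partial G$,
\begin{equation*}
\langle g(u),\nabla V_u(u)\rangle = -\|\nabla V(u)\|^2 \leq -(\eta^-)^2 < 0,
\end{equation*}
which gives \eqref{cond-th-main}. All hypotheses of Theorem~\ref{th-main} are now satisfied, and that theorem yields a $T$-periodic solution of \eqref{eq-3.1} with values in $\overline{G}=M$. There is no real obstacle here: the only subtlety is the standard verification that $\partial(\mathrm{int\,}M)=\partial M$ under the non-degeneracy condition \eqref{eq-nondeg}, which guarantees that the bounding functions $V_u=V-c$ correctly describe $G$ in a neighborhood of each boundary point.
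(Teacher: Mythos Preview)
Your proposal is correct and follows essentially the same route as the paper: the discussion preceding Corollary~\ref{cor-3.1} sets $G=\mathrm{int\,}M$, takes $V_u(x)=V(x)-c$ as the (non-degenerate) bounding functions, chooses $g(x)=-\nabla V(x)$ to verify \eqref{cond-th-main}, and then invokes Theorem~\ref{th-main}. Your write-up adds the explicit values $\eta^\pm$ and the remark on $\partial(\mathrm{int\,}M)=\partial M$, which are exactly the points the paper leaves implicit.
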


\begin{remark}\label{rem-3.1}
The conditions \eqref{eq-nondeg} and \eqref{eq-corollary}
are optimal. Indeed, concerning \eqref{eq-nondeg},
the same case considered in Example~\ref{ex-3.1} shows that the result is no more true if
$\nabla V$ vanishes at some point of the boundary. On the other hand, it is trivial
to produce cases where \eqref{eq-corollary} fails at some point of the boundary
and the conclusion of Corollary~\ref{cor-3.1} does not hold
(take, for instance, $f(t,y)=f(y)$ and $G=]-1,1[$ as in Example~\ref{ex-3.1} and
$V(x)=x^{2}-1$).
\hfill$\lhd$
\end{remark}

\begin{remark}\label{rem-3.2}
In order to apply Corollary~\ref{cor-3.1},
we should know the topological structure of the level
sets and sub-level sets of the Lyapunov-like functions.
This is a classical problem already studied in \cite{Wi-1967}
and also related to the Poincar\'{e} conjecture.
For a recent contribution, in the light of the verification of
the conjecture in all dimensions by Perelman, Freedman and Smale,
we refer to \cite{By-2008}.
See also \cite{MaPe-2006} and \cite[Corollary~11.2, p.~539]{Ha-1964}
for a connection to the Markus--Yamabe conjecture.
For instance, according to \cite[Theorem~1.2]{By-2008} we have that
the sub-level set $M = V^{-1}(]-\infty,c])$ is a compact set with nonempty
interior, if $V \colon \mathbb{R}^{N}\to \mathbb{R}$ is a smooth and proper function
with a compact set of critical points and $c$ is sufficiently large.
We also notice that Corollary~\ref{cor-3.1} and its consequences are
strongly related to the classical results of Krasnosel'ski\u{\i}
on guiding functions and the celebrated theorem
on the degree of the gradient of coercive maps \cite[Lemma~II.6.5]{Kr-1968}
(see also \cite{Am-1982,Am-1990} and \cite[Section~5.1.4]{DiMa-2021}).
\hfill$\lhd$
\end{remark}

\subsection{Applications to the non-convex case}\label{section-3.2}

Our aim now is to provide another application of Theorem~\ref{th-main}
outside the framework of convex sets.
With this respect, we deal with a class of star-shaped domains.
According to a standard terminology, a subset $\mathcal{U}$ of a vector space is said to be \textit{star-shaped}
with respect to a point $p\in \mathcal{U}$, if $\mathopen{[}p,x\mathclose{]}\subset \mathcal{U}$, for each $x\in \mathcal{U}$, where $\mathopen{[}p,x\mathclose{]} := \{p+\vartheta(x-p) \colon 0\leq \vartheta\leq 1\}$ is the segment connecting $p$ and $x$. Analogously, we define $\mathopen{[}p,x\mathclose{[} :=\{p+\vartheta(x-p) \colon 0\leq \vartheta<1\}$.

Usually, in the context of fixed point theory in Euclidean spaces, some further properties are required. In particular, if $\mathcal{U} = \overline{A}$ with $A\subset \mathbb{R}^{N}$ an open bounded set, we focus our study to the following case.

\begin{definition}\label{def-star}
Let $A\subset \mathbb{R}^{N}$ be a nonempty open bounded set. We say that $\overline{A}$ is \textit{strictly star-shaped} with respect to a point $p\in A$ if, for each point $u\in \partial A$,
we have $\mathopen{[}p,u\mathclose{[}\subset A$.
\end{definition}

Our definition corresponds to that considered by Deimling in \cite[p.~33]{De-1985}, referring to $\overline{A}$
as a star-shaped set with a simple boundary.
This is also the case presented by Yang in \cite[p.~111]{Ya-1999}, where it is required that, for every $u,v\in\partial A$ with $u\neq v$, it follows that $\mathopen{[}p,u\mathclose{]}\cap \mathopen{[}p,v\mathclose{]} = \{p\}$.
Clearly, compact convex sets with nonempty interior are strictly star-shaped (according to our definition) with respect to any interior point. Our definition requires a little more than the hypothesis that the open set $A$ is star-shaped. Indeed, if $A$ is star-shaped with respect to a point $p\in A$, also $\overline{A}$ is star-shaped with respect to $p$.
However, our condition on the boundary points might not be satisfied, as shown in the
following example.

\begin{example}\label{ex-3.2}
Let us consider the set
\begin{equation}\label{def-example}
A:=\Bigl{\{}(x_{1},x_{2})\in\mathbb{R}^{2}
\colon |x_{1}|^{\frac{1}{2}} + |x_{2}|^{\frac{1}{2}} <1 \Bigr{\}}
\setminus \Bigl{\{}(x_{1},x_{2})\in\mathbb{R}^{2}
\colon x_{1} \leq -\tfrac{1}{4}, x_{2}\geq 0 \Bigr{\}}.
\end{equation}
We notice that $A$
is star-shaped with respect to the origin as well as its closure
\begin{equation*}
\overline{A}= \Bigl{\{} (x_{1},x_{2})\in\mathbb{R}^{2}
\colon |x_{1}|^{\frac{1}{2}} + |x_{2}|^{\frac{1}{2}}
\leq 1 \Bigr{\}} \setminus \Bigl{\{} (x_{1},x_{2})
\in\mathbb{R}^{2} \colon x_{1} < -\tfrac{1}{4}, x_{2} >0 \Bigr{\}}.
\end{equation*}
Observe that the origin is the unique point $p$ such
that $\overline{A}$ star-shaped with respect to $p$.
On the other hand, $\overline{A}$ does non satisfy our condition.
Indeed, if $u=(u_{1},0)$ with $-1\leq u_{1} < -1/4$, then
$u\in \partial A$ but $\mathopen{[}0,u\mathclose{[}
\not\subset A$ and the condition of simplicity of the boundary of Deimling is not satisfied.
Equivalently, if $u=(u_{1},0)$ and $v=(v_{1},0)$ with $-1\leq u_{1}\neq v_{1} < -1/4$,
we find that $(-1/4,0)\in \mathopen{[}0,u\mathclose{]}\cap \mathopen{[}0,v\mathclose{]}$
and Yang's definition is not satisfied.
See Figure~\ref{fig-01} for a graphical representation of this example.
\hfill$\lhd$
\end{example}

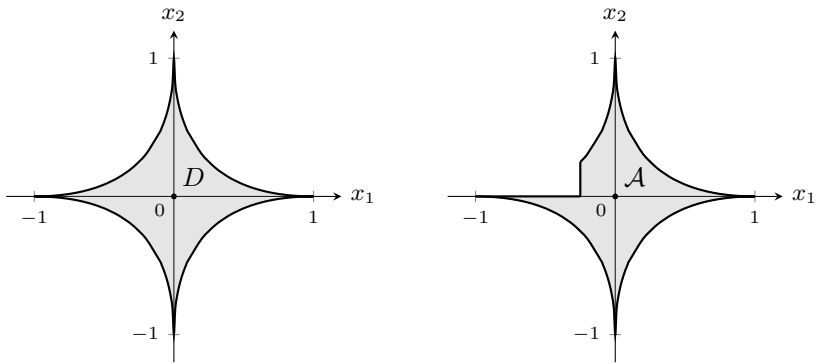
\begin{figure}[htb]
\centering
\begin{tikzpicture}
\begin{axis}[
  tick label style={font=\scriptsize},
  axis y line=middle,
  axis x line=middle,
  xtick={-1,1},
  ytick={-1,1},
  xticklabels={$-1$, $1$},
  yticklabels={$-1$, $1$},
  xlabel={\small $x_{1}$},
  ylabel={\small $x_{2}$},
every axis x label/.style={
    at={(ticklabel* cs:1.0)},
    anchor=west,
},
every axis y label/.style={
    at={(ticklabel* cs:1.0)},
    anchor=south,
},
  width=6cm,
  height=6cm,
  xmin=-1.2,
  xmax=1.2,
  ymin=-1.2,
  ymax=1.2]
\addplot [color=black, fill=gray, fill opacity=0.2, line width=0.8pt,smooth]coordinates {(-1, 0) (-0.9, 0.0026334) (-0.8, 0.0111456) (-0.7, 0.0266799) (-0.6, 0.0508067) (-0.5, 0.0857864) (-0.4, 0.135089) (-0.3, 0.204555) (-0.2, 0.305573) (-0.1, 0.467544) (-0.09, 0.49) (-0.08, 0.514315) (-0.07, 0.54085) (-0.06, 0.570102) (-0.05, 0.602786) (-0.04, 0.64) (-0.03, 0.68359) (-0.02, 0.737157) (-0.01, 0.81) (0, 1) (0.01, 0.81) (0.02, 0.737157) (0.03, 0.68359) (0.04, 0.64) (0.05, 0.602786) (0.06, 0.570102) (0.07, 0.54085) (0.08, 0.514315) (0.09, 0.49) (0.1, 0.467544) (0.2, 0.305573) (0.3, 0.204555) (0.4, 0.135089) (0.5, 0.0857864) (0.6, 0.0508067) (0.7, 0.0266799) (0.8, 0.0111456) (0.9, 0.0026334) (1, 0) (0.9, -0.0026334) (0.8, -0.0111456) (0.7, -0.0266799) (0.6, -0.0508067) (0.5, -0.0857864) (0.4, -0.135089) (0.3, -0.204555) (0.2, -0.305573) (0.1, -0.467544) (0.09, -0.49) (0.08, -0.514315) (0.07, -0.54085) (0.06, -0.570102) (0.05, -0.602786) (0.04, -0.64) (0.03, -0.68359) (0.02, -0.737157) (0.01, -0.81) (0, -1) (-0.01, -0.81) (-0.02, -0.737157) (-0.03, -0.68359) (-0.04, -0.64) (-0.05, -0.602786) (-0.06, -0.570102) (-0.07, -0.54085) (-0.08, -0.514315) (-0.09, -0.49) (-0.1, -0.467544) (-0.2, -0.305573) (-0.3, -0.204555) (-0.4, -0.135089) (-0.5, -0.0857864) (-0.6, -0.0508067) (-0.7, -0.0266799) (-0.8, -0.0111456) (-0.9, -0.0026334) (-1, 0)};
\node at (axis cs: -0.1,-0.1) {\scriptsize{$0$}};
\node at (axis cs: 0.14,0.14) {$D$};
\fill (axis cs: 0,0) circle (1.1pt);
\end{axis}
\end{tikzpicture}
\quad\quad
\begin{tikzpicture}
\begin{axis}[
  tick label style={font=\scriptsize},
  axis y line=middle,
  axis x line=middle,
  xtick={-1,1},
  ytick={-1,1},
  xticklabels={$-1$, $1$},
  yticklabels={$-1$, $1$},
  xlabel={\small $x_{1}$},
  ylabel={\small $x_{2}$},
every axis x label/.style={
    at={(ticklabel* cs:1.0)},
    anchor=west,
},
every axis y label/.style={
    at={(ticklabel* cs:1.0)},
    anchor=south,
},
  width=6cm,
  height=6cm,
  xmin=-1.2,
  xmax=1.2,
  ymin=-1.2,
  ymax=1.2]
\addplot [color=black, fill=gray, fill opacity=0.2, line width=0pt] coordinates {(-1, 0) (-0.25,0) (-0.25, 0.25) (-0.2, 0.305573) (-0.1, 0.467544) (-0.09, 0.49) (-0.08, 0.514315) (-0.07, 0.54085) (-0.06, 0.570102) (-0.05, 0.602786) (-0.04, 0.64) (-0.03, 0.68359) (-0.02, 0.737157) (-0.01, 0.81) (0, 1) (0.01, 0.81) (0.02, 0.737157) (0.03, 0.68359) (0.04, 0.64) (0.05, 0.602786) (0.06, 0.570102) (0.07, 0.54085) (0.08, 0.514315) (0.09, 0.49) (0.1, 0.467544) (0.2, 0.305573) (0.3, 0.204555) (0.4, 0.135089) (0.5, 0.0857864) (0.6, 0.0508067) (0.7, 0.0266799) (0.8, 0.0111456) (0.9, 0.0026334) (1, 0) (0.9, -0.0026334) (0.8, -0.0111456) (0.7, -0.0266799) (0.6, -0.0508067) (0.5, -0.0857864) (0.4, -0.135089) (0.3, -0.204555) (0.2, -0.305573) (0.1, -0.467544) (0.09, -0.49) (0.08, -0.514315) (0.07, -0.54085) (0.06, -0.570102) (0.05, -0.602786) (0.04, -0.64) (0.03, -0.68359) (0.02, -0.737157) (0.01, -0.81) (0, -1) (-0.01, -0.81) (-0.02, -0.737157) (-0.03, -0.68359) (-0.04, -0.64) (-0.05, -0.602786) (-0.06, -0.570102) (-0.07, -0.54085) (-0.08, -0.514315) (-0.09, -0.49) (-0.1, -0.467544) (-0.2, -0.305573) (-0.3, -0.204555) (-0.4, -0.135089) (-0.5, -0.0857864) (-0.6, -0.0508067) (-0.7, -0.0266799) (-0.8, -0.0111456) (-0.9, -0.0026334) (-1, 0)};
\addplot [color=black, line width=0.8pt] coordinates {(-1, 0) (-0.25,0)};
\addplot [color=black, line width=0.8pt, smooth] coordinates {(-0.25, 0.25) (-0.2, 0.305573) (-0.1, 0.467544) (-0.09, 0.49) (-0.08, 0.514315) (-0.07, 0.54085) (-0.06, 0.570102) (-0.05, 0.602786) (-0.04, 0.64) (-0.03, 0.68359) (-0.02, 0.737157) (-0.01, 0.81) (0, 1) (0.01, 0.81) (0.02, 0.737157) (0.03, 0.68359) (0.04, 0.64) (0.05, 0.602786) (0.06, 0.570102) (0.07, 0.54085) (0.08, 0.514315) (0.09, 0.49) (0.1, 0.467544) (0.2, 0.305573) (0.3, 0.204555) (0.4, 0.135089) (0.5, 0.0857864) (0.6, 0.0508067) (0.7, 0.0266799) (0.8, 0.0111456) (0.9, 0.0026334) (1, 0) (0.9, -0.0026334) (0.8, -0.0111456) (0.7, -0.0266799) (0.6, -0.0508067) (0.5, -0.0857864) (0.4, -0.135089) (0.3, -0.204555) (0.2, -0.305573) (0.1, -0.467544) (0.09, -0.49) (0.08, -0.514315) (0.07, -0.54085) (0.06, -0.570102) (0.05, -0.602786) (0.04, -0.64) (0.03, -0.68359) (0.02, -0.737157) (0.01, -0.81) (0, -1) (-0.01, -0.81) (-0.02, -0.737157) (-0.03, -0.68359) (-0.04, -0.64) (-0.05, -0.602786) (-0.06, -0.570102) (-0.07, -0.54085) (-0.08, -0.514315) (-0.09, -0.49) (-0.1, -0.467544) (-0.2, -0.305573) (-0.3, -0.204555) (-0.4, -0.135089) (-0.5, -0.0857864) (-0.6, -0.0508067) (-0.7, -0.0266799) (-0.8, -0.0111456) (-0.9, -0.0026334) (-1, 0)};
\addplot [color=black, line width=0.8pt] coordinates {(-0.25,0) (-0.25, 0.25)};
\node at (axis cs: -0.1,-0.1) {\scriptsize{$0$}};
\node at (axis cs: 0.14,0.14) {$\mathcal{A}$};
\fill (axis cs: 0,0) circle (1.1pt);
\end{axis}
\end{tikzpicture}
\captionof{figure}{Representation of the set
$D:=\{(x_{1},x_{2})\in\mathbb{R}^{2} \colon |x_{1}|^{\frac{1}{2}} + |x_{2}|^{\frac{1}{2}} <1 \}$
(on the left) and of the set $A$ defined in \eqref{def-example} (on the right).
Both $D$ and $A$ (and their closures) are star-shaped with respect to
the origin and the origin is the unique point such that these sets are
star-shaped with respect to it.
However, $\overline{D}$ is strictly star-shaped with
respect to the origin, while $\overline{A}$ is not strictly
star-shaped with respect to the origin (and not with respect to any other point).}
\label{fig-01}
\end{figure}

The class of star-shaped sets with the boundary condition considered in Definition~\ref{def-star}, is relevant in fixed
point theory. In particular, a continuous map $\phi$ such that
$\phi(\partial A)\subset \overline{A}$ has a fixed point
\cite[p.~33]{De-1985} (see also \cite[Corollary~1]{Za-1996}).
For other fixed point theorems in the setting of star-shaped sets, see \cite[Section~19.5]{HHMM-2020} and
\cite{Pa-2001}. Star-shaped sets with the stronger condition at
the boundary considered in Definition~\ref{def-star} are also
studied in different contexts. See \cite[p.~1008]{HHMM-2020} and the references therein
where these sets are also referred to as \textit{strongly star-shaped} or
\textit{radiative} at $p$. See also \cite{HaMa-2011} for a comparison of different properties concerning convex and
star-shaped sets.

In the next lemma we show that a strictly star-shaped set is homeomorphic to a closed unit ball
(since we have not found a precise reference in the literature, we give also the simple proof,
for the reader's convenience).

\begin{lemma}\label{lem-3.2}
Let $A\subset \mathbb{R}^{N}$ be a nonempty open bounded set with $\overline{A}$ strictly star-shaped with respect to a point $p\in A$. Then, for each unit vector $v\in \mathbb{S}^{N-1}$ there is a unique $\vartheta_{v}\in \mathopen{]}0,+\infty\mathclose{[}$ such that $p+\vartheta_{v} v \in \partial A$. Moreover, the map $\mathbb{S}^{N-1}\ni v\mapsto \vartheta_{v}$ is continuous and $\overline{A}$ is homeomorphic to the unit ball $B[0,1]$ of $\mathbb{R}^{N}$.
\end{lemma}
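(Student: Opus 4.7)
The plan is to define $\vartheta_v$ as the distance from $p$ along the ray in direction $v$ to $\partial A$, verify that this distance is well-defined and continuous in $v$ using strict star-shapedness, and then build the homeomorphism by radially rescaling the closed unit ball.

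For each $v \in \mathbb{S}^{N-1}$ I would set
\[
\vartheta_v := \sup\{t \geq 0 \colon p + tv \in A\}.
\]
Since $p \in A$ and $A$ is open, $\vartheta_v > 0$; since $A$ is bounded there exists $R > 0$ independent of $v$ with $\vartheta_v \leq R$. Choosing $t_n \nearrow \vartheta_v$ with $p + t_n v \in A$ gives $p + \vartheta_v v \in \overline{A}$, and the openness of $A$ together with the definition of sup rules out $p + \vartheta_v v \in A$; hence $p + \vartheta_v v \in \partial A$. For uniqueness, suppose $0 < s_1 < s_2$ with both $p + s_1 v$ and $p + s_2 v$ in $\partial A$. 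Applying Definition~\ref{def-star} at the farther boundary point $p + s_2 v$ gives $[p, p+s_2 v) \subset A$, which contains $p + s_1 v$, contradicting $p + s_1 v \in \partial A$. The same argument yields the stronger fact that $p + tv \notin \overline{A}$ for every $t > \vartheta_v$, which I would record for later use.

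For the continuity of $v \mapsto \vartheta_v$, let $v_n \to v$; since $\vartheta_{v_n} \leq R$, it suffices to identify every accumulation point with $\vartheta_v$. Given $\vartheta_{v_{n_k}} \to t^\ast$, if $t^\ast > \vartheta_v$ I would choose $s \in (\vartheta_v, t^\ast)$, observe that $s < \vartheta_{v_{n_k}}$ for large $k$ (so $p + s v_{n_k} \in A$), and pass to the limit in the closed set $\overline{A}$ to obtain $p + sv \in \overline{A}$, contradicting the last sentence of the previous paragraph. If instead $t^\ast < \vartheta_v$, I would pick $s \in (t^\ast, \vartheta_v)$; then $p + sv \in A$, so by openness $p + sv_{n_k} \in A$ for large $k$, giving $\vartheta_{v_{n_k}} \geq s > t^\ast$, against $\vartheta_{v_{n_k}} \to t^\ast$.

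Finally, I would define $h : B[0,1] \to \overline{A}$ by $h(0) := p$ and $h(x) := p + \vartheta_{x/\|x\|}\, x$ for $x \neq 0$. Continuity on $B[0,1]\setminus\{0\}$ is immediate from the previous paragraph, continuity at $0$ follows from $\|h(x)-p\| \leq R\|x\|$, and the characterization of $\overline{A}$ as the union of the closed radial segments $[p, p+\vartheta_v v]$ for $v \in \mathbb{S}^{N-1}$, meeting only at $p$, yields both injectivity and surjectivity. Since $B[0,1]$ is compact and $\overline{A}$ Hausdorff, $h$ is automatically a homeomorphism. The main obstacle throughout is ensuring that a ray from $p$ cannot re-enter $\overline{A}$ after first leaving $A$; this is precisely where Definition~\ref{def-star} is used, and it fails for the merely star-shaped set of Example~\ref{ex-3.2}, explaining why the strictly star-shaped hypothesis is essential.
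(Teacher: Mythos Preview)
Your proof is correct and follows essentially the same route as the paper: define $\vartheta_v$ via the radial intersection with $\partial A$, use strict star-shapedness for uniqueness, establish continuity of $v\mapsto\vartheta_v$, and build the radial homeomorphism together with the compact-to-Hausdorff argument. The only cosmetic difference is in the continuity step: the paper invokes the closed-graph criterion (since $p+\vartheta_{v_n}v_n\in\partial A$ and $\partial A$ is closed, any limit $p+\vartheta^{*}v^{*}$ lies in $\partial A$, whence $\vartheta^{*}=\vartheta_{v^{*}}$ by uniqueness), whereas you argue directly via the two-case subsequence analysis; both arguments are valid.
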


\begin{proof}
Let $0 < r < R$ be such that $B(p,r)\subset \overline{A}\subset B[p,R]$. Hence, for every $v\in \mathbb{S}^{N-1}$, the ray $\{p + t v \colon t\geq 0\}$ intersects $\partial A$ at a point $p+\vartheta_{v} v$ which is unique, as a consequence of Definition~\ref{def-star}. Observe that $r \leq \vartheta_{v} \leq R$ for all $v\in \mathbb{S}^{N-1}$. We claim that the map $\phi \colon \mathbb{S}^{N-1}\to \mathopen{[}r,R\mathclose{]}$ defined by $\phi(v):= \vartheta_{v}$ is continuous. Since the range of $\phi$ is contained in a compact set, it will be sufficient to prove that the graph of $\phi$ is closed (see \cite[Problem~108]{Wi-1983}). Accordingly, let $v_{n}\to v^{*}$ in $\mathbb{S}^{N-1}$ and $\vartheta_{v_{n}}\to \vartheta^{*}$. By definition, $p + \vartheta_{v_{n}} v_{n}\to p + \vartheta^{*} v^{*}$, with $p + \vartheta_{v_{n}} v_{n}\in \partial A$ for each $n$. Hence $p + \vartheta^{*} v^{*}\in \partial A$ and, by the uniqueness of the intersection of the ray $\{p + t v^{*} \colon t\geq 0\}$ with $\partial A$, we conclude that $\vartheta^{*}= \vartheta_{v^{*}}$. This proves that the graph of $\phi$ is closed.

Next, we introduce the map $\Psi \colon B[0,1] \to \overline{A}$, defined by
\begin{equation*}
\begin{cases}
\, \Psi(0)=p,
\\
\, \Psi(x)= p + x \phi\biggl{(}\dfrac{x}{\|x\|}\biggr{)}, \quad \text{for $x\neq 0$.}
\end{cases}
\end{equation*}
We have that $\Psi$ is continuous (by the continuity and the boundedness of $\phi$). Moreover, $\Psi|_{\mathbb{S}^{N-1}} \colon \mathbb{S}^{N-1}\to \partial A$, which maps $v\in \mathbb{S}^{N-1}$ to $p+\vartheta_{v} v \in \partial A$, is bijective and also $\Psi$ maps bijectively any segment $\mathopen{[}0,v\mathclose{]}\in B[0,1]$ to $\mathopen{[}p,p+\phi(v)v\mathclose{]}\in \overline{A}$. Consequently, $\Psi$ is a continuous one-to-one map from $B[0,1]$ onto $\overline{A}$ and hence a homeomorphism.
\end{proof}

The following property will be crucial in our result for non-convex sets.

\begin{lemma}\label{lem-3.3}
Let $G \subset \mathbb{R}^{N}$ be a nonempty open bounded set with $\overline{G}$ strictly star-shaped with respect to a point $p\in G$. Moreover, suppose that there exists a family $(V_{u})_{u\in \partial G}$ of bounding functions for $G$.
Then, it holds that
\begin{equation*}
\langle \nabla V_{u}(u),p-u\rangle \leq 0,
\quad \text{for all $u\in \partial G$.}
\end{equation*}
\end{lemma}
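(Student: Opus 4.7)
The plan is to exploit the strict star-shapedness by differentiating the bounding function $V_u$ along the segment from $p$ to $u$, and reading off a sign from the one-sided derivative at the endpoint $u$.

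Fix $u \in \partial G$ and consider the curve $\gamma(\vartheta) := p + \vartheta(u-p)$ for $\vartheta \in [0,1]$, so that $\gamma(1) = u$ and $\gamma(\vartheta) \in \mathopen{[}p,u\mathclose{[}$ for $\vartheta \in [0,1)$. By the strict star-shapedness hypothesis (Definition~\ref{def-star}), $\mathopen{[}p,u\mathclose{[} \subset G$, hence $\gamma(\vartheta) \in G$ for all $\vartheta \in [0,1)$. Since $\gamma(\vartheta) \to u$ as $\vartheta \to 1^-$, there exists $\vartheta_0 \in (0,1)$ such that $\gamma(\vartheta) \in B(u,r_u)$ for all $\vartheta \in [\vartheta_0,1]$. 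For such $\vartheta < 1$, $\gamma(\vartheta) \in G \cap B(u,r_u)$, so the defining property of the bounding function $V_u$ gives $V_u(\gamma(\vartheta)) < 0$.

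Next, define $\eta(\vartheta) := V_u(\gamma(\vartheta))$ on $[\vartheta_0,1]$. Then $\eta$ is $\mathcal{C}^1$, $\eta(1) = V_u(u) = 0$, and $\eta(\vartheta) < 0$ for $\vartheta \in [\vartheta_0,1)$. The chain rule gives
\[
\eta'(1) = \langle \nabla V_u(\gamma(1)), \gamma'(1) \rangle = \langle \nabla V_u(u), u - p \rangle.
\]
For $\vartheta \in [\vartheta_0,1)$, the difference quotient
\[
\frac{\eta(\vartheta)-\eta(1)}{\vartheta-1} = \frac{V_u(\gamma(\vartheta))}{\vartheta-1}
\]
is the ratio of a strictly negative numerator and a strictly negative denominator, hence nonnegative. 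Passing to the limit as $\vartheta \to 1^-$, one concludes $\eta'(1) \geq 0$, i.e.~$\langle \nabla V_u(u), u-p\rangle \geq 0$, which is equivalent to $\langle \nabla V_u(u), p-u\rangle \leq 0$, as claimed.

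The argument is essentially routine once the right curve is chosen; I do not foresee a genuine obstacle. The only subtlety to watch is the endpoint behaviour: one must verify that the segment lies in the domain $B(u,r_u)$ of $V_u$ near $\vartheta = 1$ (guaranteed by continuity of $\gamma$ at $\vartheta = 1$ combined with $\gamma(\vartheta) \in G$ for $\vartheta < 1$ from strict star-shapedness), and then use the sign of $V_u$ on $G \cap B(u,r_u)$ together with $V_u(u)=0$ to control the one-sided derivative. No additional regularity beyond $\mathcal{C}^1$ of $V_u$ is needed.
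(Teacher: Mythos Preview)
Your proof is correct and follows essentially the same approach as the paper's: parametrize the segment from $p$ to $u$, use strict star-shapedness together with the bounding-function property to see that $V_u$ is negative along the segment near $u$ and zero at $u$, and conclude that the one-sided derivative at the endpoint is nonnegative. The only differences are notational (the paper composes directly and calls the resulting scalar function $\gamma$, while you separate the curve $\gamma$ from the composite $\eta$) and that you spell out the difference-quotient argument a bit more explicitly.
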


\begin{proof}
Let $u\in \partial G$ and $V_{u} \colon B(u,r_{u})\to \mathbb{R}$ be the corresponding bounding function. By definition, $V_{u}(u)=0$ and $V_{u}(x) < 0$ for all $x\in G\cap B(u,r_{u})$. Let us consider the segment $\mathopen{[}p,u\mathclose{]}\subset \overline{G}$ with $\mathopen{[}p,u\mathclose{[}\in G$. Taking a parametrization for the segment, we can introduce the function
$\gamma(\vartheta):=V_{u}(p + \vartheta(u-p))$, defined for $\vartheta \in \mathopen{]}1 - r_{u}/\|u-p\|,1\mathclose{]}$ and such that $\gamma(1) = 0$ and $\gamma(\vartheta)< 0$ for $\vartheta <1$. From this, we find that $\langle \nabla V_{u}(u),u-p\rangle = \gamma'(1) \geq 0$ and hence the thesis.
\end{proof}

After these preliminary results, we are now in position to give our application of Theorem~\ref{th-main} to star-shaped domains.

\begin{theorem}\label{th-starshaped}
Let $(V_{u})_{u\in \partial G}$ be a family of non-degenerate bounding functions
for a nonempty open bounded set $G$ with $\overline{G}$ strictly star-shaped with respect
to all the points in a ball $B[p,\delta]\subset G$.
Assume that \eqref{eq-3.4} holds.
Then, there exists a $T$-periodic solution of \eqref{eq-3.1} with values in $\overline{G}$.
\end{theorem}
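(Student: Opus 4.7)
My plan is to reduce the statement to Theorem~\ref{th-main} by producing an explicit auxiliary vector field $g$ satisfying the inwardness condition \eqref{cond-th-main}. The natural candidate, suggested by the geometry of strictly star-shaped sets, is the radial field pointing toward the star-center, namely
\begin{equation*}
g(u) := p - u, \quad u\in\partial G,
\end{equation*}
which is clearly continuous on $\partial G$. The topological hypothesis of Theorem~\ref{th-main} is for free: since $\overline{G}$ is strictly star-shaped with respect to $p$ (in fact with respect to every point of $B[p,\delta]$), Lemma~\ref{lem-3.2} furnishes a homeomorphism between $\overline{G}$ and the closed unit ball of $\mathbb{R}^{N}$.

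The real work is verifying \eqref{cond-th-main} uniformly in $u$, and this is where the slightly stronger hypothesis (star-shapedness with respect to a whole ball, not merely a single point) comes into play. By Lemma~\ref{lem-3.3}, applied not only to $p$ but to every $q\in B[p,\delta]$ (each of which lies in $G$ and acts as a star-center), one has
\begin{equation*}
\langle \nabla V_{u}(u),\, q-u\rangle \leq 0, \quad \text{for all } u\in\partial G \text{ and } q\in B[p,\delta].
\end{equation*}
Writing $q = p+\delta w$ with $w\in\mathbb{S}^{N-1}$ arbitrary, this becomes
\begin{equation*}
\langle \nabla V_{u}(u),\, p-u\rangle + \delta\,\langle \nabla V_{u}(u),\, w\rangle \leq 0.
\end{equation*}
Maximizing the left-hand side over $w\in\mathbb{S}^{N-1}$ (choose $w=\nabla V_{u}(u)/\|\nabla V_{u}(u)\|$, which is legitimate since the family is non-degenerate) yields
\begin{equation*}
\langle g(u),\, \nabla V_{u}(u)\rangle = \langle p-u,\, \nabla V_{u}(u)\rangle \leq -\delta\,\|\nabla V_{u}(u)\| \leq -\delta\,\eta^{-},
\end{equation*}
where $\eta^{-}>0$ comes from \eqref{hp-non-deg*}. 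Thus $\sup_{u\in\partial G}\langle g(u),\nabla V_{u}(u)\rangle \leq -\delta\eta^{-}<0$, which is exactly \eqref{cond-th-main}.

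With \eqref{eq-3.4} assumed, \eqref{cond-th-main} just established, and $\overline{G}$ homeomorphic to a closed ball via Lemma~\ref{lem-3.2}, all hypotheses of Theorem~\ref{th-main} are met; applying it produces the desired $T$-periodic solution of \eqref{eq-3.1} with values in $\overline{G}$. The only point where something non-trivial is used is the passage from pointwise star-shapedness to a uniform lower bound on $\langle p-u,\nabla V_{u}(u)\rangle$; without the full ball $B[p,\delta]$ of star-centers one only gets $\leq 0$ from Lemma~\ref{lem-3.3}, which would not suffice for \eqref{cond-th-main}. I expect no other obstacle, since the approximation machinery is entirely absorbed into Theorem~\ref{th-main}.
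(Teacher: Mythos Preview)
Your proof is correct and follows essentially the same route as the paper: define $g(u)=p-u$, apply Lemma~\ref{lem-3.3} at the shifted point $p+\delta\,\nabla V_{u}(u)/\|\nabla V_{u}(u)\|\in B[p,\delta]$ to obtain the uniform bound $\langle p-u,\nabla V_{u}(u)\rangle\leq -\delta\eta^{-}$, invoke Lemma~\ref{lem-3.2} for the ball-homeomorphism, and conclude via Theorem~\ref{th-main}. The only cosmetic difference is that the paper first normalizes to $\|\nabla V_{u}(u)\|=1$ and then picks the single point $p+\delta\nabla V_{u}(u)$, whereas you keep the general $\eta^{-}$ and phrase the choice as a maximization over $w\in\mathbb{S}^{N-1}$; the content is identical.
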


\begin{proof}
First of all, without loss of generality, we suppose
(passing to $V_u(x)/\|\nabla V_{u}(u)\|$, if necessary)
that $\|\nabla V_{u}(u)\|=1$, for each $u\in \partial G$. Next, we observe that
\begin{equation*}
\langle \nabla V_{u}(u),p-u\rangle \leq -\delta,
\quad \text{for all $u\in \partial G$.}
\end{equation*}
Indeed, it is sufficient to apply Lemma~\ref{lem-3.3} to the point
$x:=p+\delta \, \nabla V_{u}(u)\in B[p,\delta]$,
so that, for each $u\in \partial G$, it holds that
\begin{equation*}
\langle \nabla V_{u}(u),p-u\rangle =
\langle \nabla V_{u}(u),x-u\rangle
-\delta \langle \nabla V_{u}(u),\nabla V_{u}(u)\rangle
\leq -\delta.
\end{equation*}
Now, defining $g \colon \overline{G}\to \mathbb{R}^{N}$ as $g(x)=p-x$,
we find that condition \eqref{cond-th-main} holds true.
Moreover, by Lemma~\ref{lem-3.2}, $\overline{G}$ is homeomorphic to the unit ball $B[0,1]$
of $\mathbb{R}^{N}$. Then, we reach the thesis as an application of Theorem~\ref{th-main}.
\end{proof}

\begin{remark}\label{rem-3.3}
The assumption that the set $\overline{G}$ is strictly star-shaped not only with respect to a point $p$,
but also with respect to all the points in a neighborhood of $p$,
is an hypothesis which is rather common in the theory of star-shaped
sets and is usually referred saying that the \textit{strong kernel}
of $\overline{G}$ has nonempty interior (cf.~\cite{HHMM-2020}).
Actually, if the open set $G$ has a \textit{kernel with nonempty interior}
(according to \cite[p.~1005]{HHMM-2020}), that is $G$
is star-shaped with respect to all the points of (small) open ball $B
\subset G$ then, according to \cite[Theorem~3, p.~1006]{HHMM-2020},
for each $p\in B$, and $u\in \partial G$, it follows that $\mathopen{[}p,u\mathclose{[}\subset G$.
As a consequence, $\partial \overline{G} = \partial G$ and $\overline{G}$ is strictly star-shaped with respect to $p$.
Therefore, $\overline{G}$ is strictly star-shaped
also with respect to all the points in a neighborhood of $p$.
\hfill$\lhd$
\end{remark}

\begin{example}\label{ex-3.3}
Let us consider the autonomous planar differential system
\begin{equation}\label{eq-nonuniq}
\dot{x_{1}} = 1, \quad \dot{x_{2}} = \varphi(x_{2})
\end{equation}
with
\begin{equation*}
\varphi(0)=0 \quad \text{and} \quad
\varphi(s):= -2 \dfrac{s}{\sqrt{|s|}}, \; \text{ for $s\neq 0$.}
\end{equation*}
Using the positions $x_{1}:=t$, $x_{2}:=x$, we see that
the positive semi-orbits of system \eqref{eq-nonuniq} are the graphs of
the solutions $(t,x(t))$ of $\dot{x} = \varphi(x)$ satisfying the initial condition
$x(t_0)= x_0$, for $t\geq t_0$ in the $(t,x)$-plane. The function $\varphi$
is continuous and decreasing, hence the forward uniqueness for the
solutions of the Cauchy problems is guaranteed, according to
\cite[Corollary~6.3, p.~34]{Ha-1964}.

We introduce now the set
\begin{equation}\label{def-G-ex-3.3}
G := \bigl{\{} (x_{1},x_{2}) \in \mathopen{]}-1,1\mathclose{[}
\times \mathopen{]}-1,1\mathclose{[} \colon |x_{2}| < (x_{1}-1)^{2} \bigr{\}}
\end{equation}
(see Figure~\ref{fig-02} for a graphical representation). The set $\overline{G}$ is strictly star-shaped only with respect to the points belonging
to the segment $\mathopen{[}-1,0\mathclose{]}\times\{0\}$
and thus is not strictly star-shaped with respect to all the points in any ball contained in $G$.

\begin{figure}[htb]
\centering
\begin{tikzpicture}
\begin{axis}[
  tick label style={font=\scriptsize},
  axis y line=middle,
  axis x line=middle,
  xtick={-1,1},
  ytick={-1,1},
  xticklabels={ , },
  yticklabels={ , },
  xlabel={\small $x_{1}$},
  ylabel={\small $x_{2}$},
every axis x label/.style={
    at={(ticklabel* cs:1.0)},
    anchor=west,
},
every axis y label/.style={
    at={(ticklabel* cs:1.0)},
    anchor=south,
},
  width=7cm,
  height=7cm,
  xmin=-1.5,
  xmax=1.5,
  ymin=-1.5,
  ymax=1.5]
\addplot[thick] graphics[xmin=-1.5,ymin=-1.5,xmax=1.5,ymax=1.5] {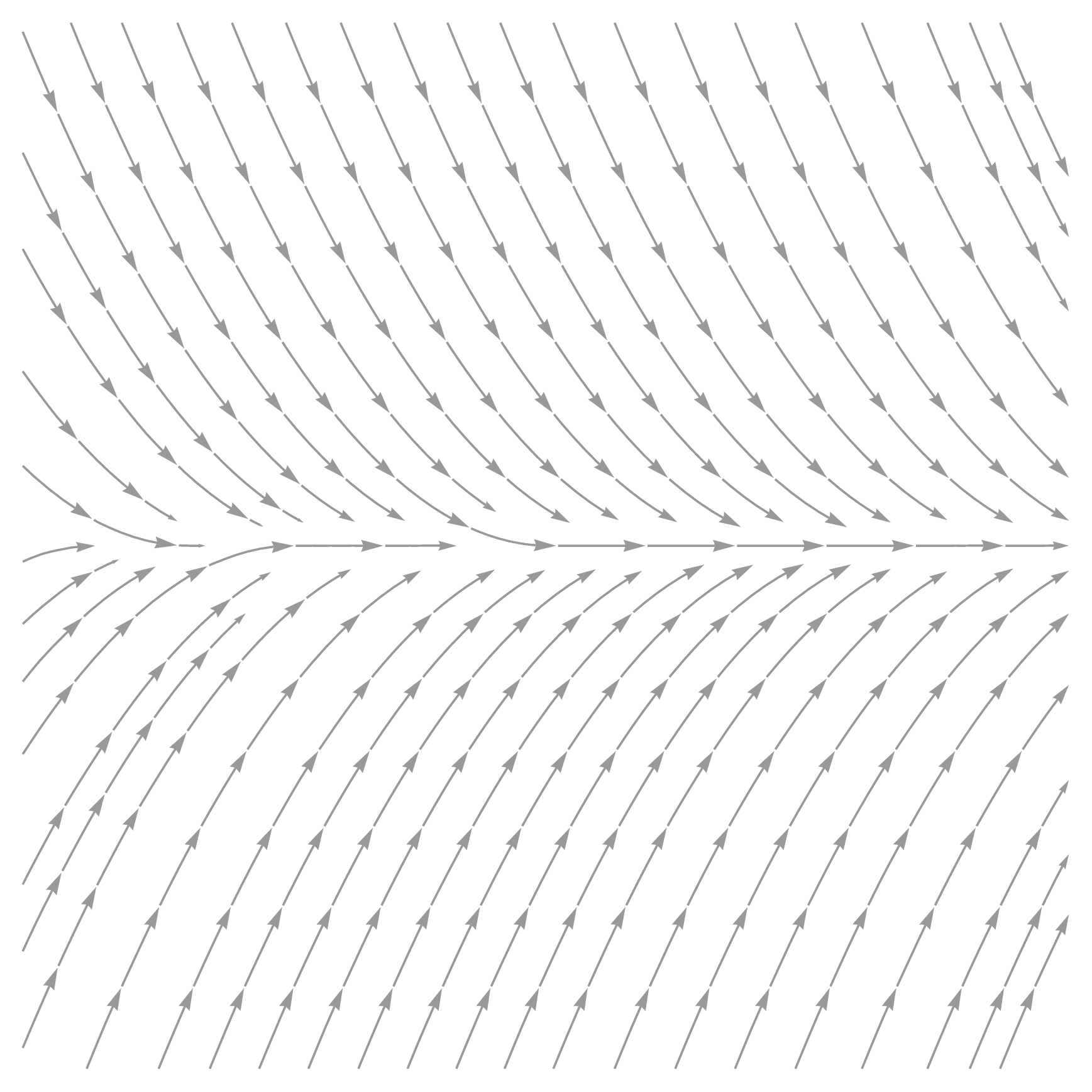};
\addplot [color=black, fill=gray, fill opacity=0.2, line width=0pt] coordinates {(0, -1) (-1, -1) (-1, 1) (0, 1) (0.05, 0.9025) (0.1, 0.81) (0.15, 0.7225) (0.2, 0.64) (0.25, 0.5625) (0.3, 0.49) (0.35, 0.4225) (0.4, 0.36) (0.45, 0.3025) (0.5, 0.25) (0.55, 0.2025) (0.6, 0.16) (0.65, 0.1225) (0.7, 0.09) (0.75, 0.0625) (0.8, 0.04) (0.85, 0.0225) (0.9, 0.01) (0.95, 0.0025) (1, 0) (0.95, -0.0025) (0.9, -0.01) (0.85, -0.0225) (0.8, -0.04) (0.75, -0.0625) (0.7, -0.09) (0.65, -0.1225) (0.6, -0.16) (0.55, -0.2025) (0.5, -0.25) (0.45, -0.3025) (0.4, -0.36) (0.35, -0.4225) (0.3, -0.49) (0.25, -0.5625) (0.2, -0.64) (0.15, -0.7225) (0.1, -0.81) (0.05, -0.9025) (0, -1)};
\addplot [color=black, line width=0.8pt] coordinates {(0, -1) (-1, -1) (-1, 1) (0, 1)};
\addplot [color=black, line width=0.8pt, smooth] coordinates {(0, 1) (0.05, 0.9025) (0.1, 0.81) (0.15, 0.7225) (0.2, 0.64) (0.25, 0.5625) (0.3, 0.49) (0.35, 0.4225) (0.4, 0.36) (0.45, 0.3025) (0.5, 0.25) (0.55, 0.2025) (0.6, 0.16) (0.65, 0.1225) (0.7, 0.09) (0.75, 0.0625) (0.8, 0.04) (0.85, 0.0225) (0.9, 0.01) (0.95, 0.0025) (1, 0) (0.95, -0.0025) (0.9, -0.01) (0.85, -0.0225) (0.8, -0.04) (0.75, -0.0625) (0.7, -0.09) (0.65, -0.1225) (0.6, -0.16) (0.55, -0.2025) (0.5, -0.25) (0.45, -0.3025) (0.4, -0.36) (0.35, -0.4225) (0.3, -0.49) (0.25, -0.5625) (0.2, -0.64) (0.15, -0.7225) (0.1, -0.81) (0.05, -0.9025) (0, -1)};
\node at (axis cs: -0.1,-0.1) {\scriptsize{$0$}};
\node at (axis cs: -0.5,0.6) {$G$};
\node at (axis cs: 0.5,0.55) {$\Gamma_{1}$};
\node at (axis cs: -0.5,1.15) {$\Gamma_{2}$};
\node at (axis cs: -1.15, 0.2) {$\Gamma_{3}$};
\node at (axis cs: -0.5,-1.15) {$\Gamma_{4}$};
\node at (axis cs: 0.5,-0.55) {$\Gamma_{5}$};
\node at (axis cs: 0.12,-1) {\scriptsize{$-1$}};
\node at (axis cs: 0.12,1) {\scriptsize{$1$}};
\node at (axis cs: -0.87,-0.1) {\scriptsize{$-1$}};
\node at (axis cs: 1,-0.1) {\scriptsize{$1$}};
\fill (axis cs: 0,0) circle (1.1pt);
\end{axis}
\end{tikzpicture}
\captionof{figure}{Representation of the set $G$ defined in \eqref{def-G-ex-3.3} and the flow associated with system \eqref{eq-nonuniq}.}
\label{fig-02}
\end{figure}

We split the boundary of $G$ into the following sets:
\begin{align*}
&\Gamma_{1}:= \bigl{\{} (x_{1},x_{2}) \colon 0\leq x_{1}\leq 1, x_{2}=(x_{1}-1)^{2} \bigr{\}},
\\
&\Gamma_{2}:= \bigl{\{} (x_{1},x_{2}) \colon -1\leq x_{1}\leq 0, x_{2}=1 \bigr{\}},
\\
&\Gamma_{3}:= \bigl{\{} (x_{1},x_{2}) \colon x_{1}=-1, -1\leq x_{2}\leq 1 \bigr{\}},
\\
&\Gamma_{4}:= \bigl{\{} (x_{1},x_{2}) \colon -1\leq x_{1}\leq 0, x_{2}=-1 \bigr{\}},
\\
&\Gamma_{5}:= \bigl{\{} (x_{1},x_{2}) \colon 0\leq x_{1}\leq 1, x_{2}= -(x_{1}-1)^{2} \bigr{\}}.
\end{align*}
A family of bounding functions associated with the set $G$ is given by
$(V_{u})_{u\in\partial G}$, with
\begin{align*}
&V_{u}(x_{1},x_{2})=V^{1}(x_{1},x_{2}):=- (x_{1}-1)^{2} + x_{2},
& \text{if $u\in \Gamma_{1}$,}
\\
&V_{u}(x_{1},x_{2})=V^{2}(x_{1},x_{2}):=-1+x_{2},
& \text{if $u\in \Gamma_{2}$,}
\\
&V_{u}(x_{1},x_{2})=V^{3}(x_{1},x_{2}):=-1-x_{1},
& \text{if $u\in \Gamma_{3}$,}
\\
&V_{u}(x_{1},x_{2})=V^{4}(x_{1},x_{2}):=-1-x_{2},
& \text{if $u\in \Gamma_{4}$,}
\\
&V_{u}(x_{1},x_{2})=V^{5}(x_{1},x_{2}):=- (x_{1}-1)^{2} - x_{2},
& \text{if $u\in \Gamma_{5}$.}
\end{align*}
We warn that, consistently with our definition of bounding functions, it is sufficient
to have \textit{one} function $V_{u}$ at any point $u\in \partial G$. For this reason,
when $u\in \Gamma_{i}\cap \Gamma_{j}$ with $i\neq j$, we will just choose one of the
two possibilities given by the above list.
Now, it is easy to check that $\langle f(u),\nabla V_{u}(u) \rangle = 0$
for all $u\in \Gamma_{1}\cup \Gamma_{5}$ and
$\langle f(u),\nabla V_{u}(u) \rangle < 0$ for all $u\in \Gamma_{2}\cup\Gamma_{3}\cup \Gamma_{4}$.
Hence condition \eqref{eq-3.4} is satisfied. However, there are no periodic solutions,
nor equilibrium points for system \eqref{eq-nonuniq}.
\hfill$\lhd$
\end{example}

\begin{remark}\label{rem-3.4}
Example~\ref{ex-3.3} shows that, if we assume the weak boundary condition
\eqref{eq-3.4}, then the condition of strong kernel with nonempty interior
of Theorem~\ref{th-starshaped} is optimal and cannot be removed.
Example~\ref{ex-3.3} also provides a new case which shows that the condition
of linear independence of the gradients considered in a result of
positively invariant sets in
\cite[Corollary~1, formula~(10)]{Ha-1972} cannot be removed.
Indeed, in \cite[Corollary~1]{Ha-1972}, Hartman
considered the case of a bounded domain whose boundary is described by a finite number
of bounding-type functions. Differently than in our case where,
at each point $u\in \partial G$, we take one function $V_{u}$, in \cite{Ha-1972}
there is a finite set of functions $V^{k}$ (denoted as $L^{k}$ in \cite{Ha-1972})
with the condition that the vectors $\nabla V^{k}(u)$ are linearly independent at the points
$u\in \partial G$ where the $V^{k}$'s vanish. In our example, at each point
of $\Gamma_{i}\cap \Gamma_{i+1}$, for $i=1,\dots,4$, the vectors $\nabla V^{i}(u)$ and
$\nabla V^{i+1}(u)$ are linearly independent, but this does not happen
at the point $z=(1,0)\in \Gamma_{1}\cap \Gamma_{5}$ where
$\nabla V^{1}(z)= (1,0) = - \nabla V^{5}(z)$.
Indeed, the set $\overline{G}$ is not positively invariant with respect to the semi-flow
associated with system~\eqref{eq-nonuniq}, because the solution
$(x_{1}(t),x_{2}(t))=(t,0)$ is in the set for $t=0$ and escapes the set $\overline{G}$
for $t>1$. The example of Hartman in \cite[p.~513]{Ha-1972} consider a trivial set
with empty interior, given by $\{(0,0)\}$, where the condition of independence of
the gradients fails.
\hfill$\lhd$
\end{remark}

A simple condition in order to verify that a sub-level set is strictly star-shaped
with a strong kernel with nonempty interior is provided by the next result.

\begin{lemma}\label{lem-3.4}
Let $V \colon \mathbb{R}^{N}\to \mathbb{R}$ (with $N\geq 2$)
be a continuously differentiable function
and let $p\in \mathbb{R}^{N}$ and $c\in \mathbb{R}$ be such that
$V(p) < c$ with $V^{-1}(c)$ bounded and nonempty. If
\begin{equation}\label{eq-rad}
\langle \nabla V(u),p-u\rangle < 0, \quad \text{for all $u\in V^{-1}(c)$,}
\end{equation}
then $M:=V^{-1}(\mathopen{]}-\infty,c\mathclose{]})$ is
strictly star-shaped with respect
to all the points in a ball $B[p,\delta]\subset G= \mathrm{int\,}M=
V^{-1}(\mathopen{]}-\infty,c\mathclose{[})$.
\end{lemma}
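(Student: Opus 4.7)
The plan is to first identify $\partial M$ with the regular level set $V^{-1}(c)$, then upgrade the pointwise gradient hypothesis \eqref{eq-rad} to hold uniformly for all base points in a small ball around $p$, and finally to prove strict star-shapedness from each such base point by a one-dimensional sup-argument along the segment.

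To begin, I would observe that the strict inequality in \eqref{eq-rad} forces $\nabla V(u)\neq 0$ for every $u\in V^{-1}(c)$; by the implicit function theorem $V^{-1}(c)$ is a $\mathcal{C}^{1}$-hypersurface separating the sets $\{V<c\}$ and $\{V>c\}$ locally, hence $\partial M = V^{-1}(c)$ and $\mathrm{int\,}M = V^{-1}(\mathopen{]}-\infty,c\mathclose{[}) = G$. Since $V(p)<c$, continuity of $V$ gives $\delta_{1}>0$ with $B[p,\delta_{1}]\subset G$. Next, since $V^{-1}(c)$ is compact (closed and bounded by hypothesis), the map $u\mapsto \langle\nabla V(u),p-u\rangle$ attains a maximum $-\alpha$ with $\alpha>0$, and $K:=\max_{u\in V^{-1}(c)}\|\nabla V(u)\|<+\infty$. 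Choosing $\delta\in \mathopen{]}0,\delta_{1}\mathclose{]}$ so that $K\delta<\alpha$, for every $p'\in B[p,\delta]$ and every $u\in V^{-1}(c)$ one has
\[
\langle\nabla V(u),p'-u\rangle = \langle\nabla V(u),p-u\rangle + \langle\nabla V(u),p'-p\rangle \leq -\alpha+K\delta<0,
\]
so the hypothesis \eqref{eq-rad} persists when $p$ is replaced by any $p'\in B[p,\delta]$.

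For the core argument, I would fix $p'\in B[p,\delta]$ and $u\in \partial M$, and consider $\psi(\theta):=V(p'+\theta(u-p'))$ on $\mathopen{[}0,1\mathclose{]}$, which satisfies $\psi(0)<c$ and $\psi(1)=c$; proving $\psi<c$ on $\mathopen{[}0,1\mathclose{[}$ is exactly the condition $\mathopen{[}p',u\mathclose{[}\,\subset G$. Arguing by contradiction, suppose that the closed set $S:=\{\theta\in \mathopen{[}0,1\mathclose{]}\colon \psi(\theta)\geq c\}\setminus\{1\}$ is nonempty. From the upgraded hypothesis, $\psi'(1)=-\langle\nabla V(u),p'-u\rangle>0$, so $\psi<c$ on some left-neighborhood $\mathopen{[}1-\varepsilon,1\mathclose{[}$ of $1$; therefore $S$ is a closed subset of $\mathopen{[}0,1-\varepsilon\mathclose{]}$ and admits $\theta^{**}:=\max S\in \mathopen{]}0,1\mathclose{[}$ (with $\theta^{**}>0$ because $\psi(0)<c$). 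By maximality $\psi<c$ on $\mathopen{]}\theta^{**},1\mathclose{[}$, while $\psi(\theta^{**})\geq c$, so continuity forces $\psi(\theta^{**})=c$. Hence $v:=p'+\theta^{**}(u-p')\in V^{-1}(c)$, and applying the upgraded hypothesis at $v$ yields
\[
\psi'(\theta^{**}) = \langle\nabla V(v),u-p'\rangle = -\frac{1}{\theta^{**}}\,\langle\nabla V(v),p'-v\rangle > 0,
\]
so $\psi$ is strictly increasing at $\theta^{**}$, contradicting $\psi<c$ immediately to the right of $\theta^{**}$.

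The main technical subtlety I expect is the need to remove $\theta=1$ from the set $S$ before taking a supremum — otherwise $\sup S=1$ trivially and no contradiction arises. The device that makes this work is the observation that $\psi'(1)>0$ produces a definite left-neighborhood of $1$ on which $\psi<c$, so the supremum is bounded away from $1$ and corresponds to a genuine ``last'' crossing of the level set $\{V=c\}$, at which the gradient condition can be invoked. Once this bookkeeping is in place, the rest of the proof reduces to the standard reduction of the $N$-dimensional geometric statement to a one-variable calculus argument along each segment, together with the uniform continuity arguments on the compact level set that produced the ball $B[p,\delta]$.
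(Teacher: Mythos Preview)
Your argument for the segment condition $[p',u[\subset G$ and for upgrading the hypothesis from $p$ to all $p'\in B[p,\delta]$ is correct and matches the paper's approach. However, you have omitted a step that the paper treats explicitly and that the conclusion requires: showing that $G=V^{-1}(\mathopen{]}-\infty,c\mathclose{[})$ is \emph{bounded}. The paper's Definition~\ref{def-star} of ``strictly star-shaped'' is formulated only for $\overline{A}$ with $A$ a nonempty open \emph{bounded} set, and the downstream use of the lemma (via Lemma~\ref{lem-3.2}) depends on this. The hypotheses give only that $V^{-1}(c)$ is bounded, not $M$ itself. A tell-tale sign of the gap is that your proof never uses the assumption $N\geq 2$: for $N=1$ take $V(x)=-x$, $c=0$, $p=1$; then all hypotheses hold (in particular $\langle V'(0),p-0\rangle=-1<0$) while $G=\mathopen{]}0,+\infty\mathclose{[}$ is unbounded.

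The paper closes this gap by proving that every ray $\{p+sv:s\geq 0\}$, $v\in\mathbb{S}^{N-1}$, meets $V^{-1}(c)$. If some ray in direction $\hat{v}$ avoided $V^{-1}(c)$, one picks $\hat{u}\in V^{-1}(c)$ with $(\hat{u}-p)/\|\hat{u}-p\|\neq-\hat{v}$ (here $N\geq 2$ and the local hypersurface structure of $V^{-1}(c)$ are used), chooses $R$ with $V^{-1}(c)\subset B(p,R)$, and takes $K$ so large that the segment joining $p+K\hat{v}$ to $p+K(\hat{u}-p)$ lies outside $B(p,R)$. Since $V(p+K\hat{v})<c$ while $V(p+K(\hat{u}-p))>c$ (on the ray through $\hat{u}$ the one-variable argument gives $V>c$ beyond the unique crossing), the intermediate value theorem produces a point of $V^{-1}(c)$ on that segment, a contradiction. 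Once every ray from $p$ meets $V^{-1}(c)$ exactly once, $G\subset B(p,R)$ follows and your argument completes the proof.
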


\begin{proof}
It is sufficient to check that condition \eqref{eq-rad} implies that the set
$M$ is strictly star-shaped with respect to $p$. Indeed, by the compactness of
$V^{-1}(c)$, we have $\max_{u\in V^{-1}(c)} \langle \nabla V(u),p-u\rangle < 0$
and hence, for all the points $z$ in a neighborhood of $p$, it holds that $\langle \nabla V(u), z-u\rangle < 0$, for all $u\in V^{-1}(c)$.

Now following (with some simplifications) an argument from \cite{Za-1996}, we consider the auxiliary function
$v(s):= V(p + s(u-p))$ for $u\in M\setminus \{p\}$ and $s \geq 0$.
By the assumptions, we deduce that $v(0)< c$, $v(1) \leq c$, and (from \eqref{eq-rad})
$v'(s) > 0$ for all $s$ such that $v(s)=c$. This proves that the half-line
$\{p + s(u-p) \colon s\geq 0\}$ intersects $\partial M = \partial G = V^{-1}(c)$
in at most one point.
More precisely, $\{p + s(u-p) \colon s\geq 0\}$ intersects $\partial G$
exactly in $u$ (for $s=1$), if $u\in\partial G$.

To conclude our proof, we have to show that, for every $v\in \mathbb{S}^{N-1}$, the
half-line $\{p + s v \colon s\geq 0\}$ intersects $V^{-1}(c)$.
Suppose, by contradiction, that there exists $\hat{v}\in \mathbb{S}^{N-1}$
such that $V(p + s \hat{v}) < c$, for all $s \geq 0$. We choose
a point $\hat{u}\in V^{-1}(c)$ such that
$\tfrac{\hat{u} - p}{\|\hat{u} - p\|} \neq -\hat{v}$.
Such a choice is always possible because, if $u\in V^{-1}(c)\neq \emptyset$
then $\nabla V(u)\neq 0$ by \eqref{eq-rad} and the implicit function theorem
guarantees that $V^{-1}(c)$ is locally a surface around $u$. Let $R> 0$ be such that
$V^{-1}(c)\subset B(p,R)$ and let $K>1$ be sufficiently large that $\mathopen{[}p + K \hat{v},p+ K(\hat{u}-p)\mathclose{]}\cap B(0,R)=\emptyset$.
On the other hand, $V(p + K \hat{v}) < c < V(p+ K(\hat{u}-p))$ and therefore
there exists $w\in \mathopen{]}p + K \hat{v},p+ K(\hat{u}-p)\mathclose{[}$ such that $V(w)=c$,
a contradiction to the fact that $V^{-1}(c)\subset B(p,R)$.
\end{proof}

By the above lemma, we can provide a new proof,
in the frame of the Brouwer fixed point theorem,
of a result previously obtained in \cite[Theorem~2]{Za-1996}.

\begin{corollary}\label{cor-3.2}
Let $V \colon \Omega\to \mathbb{R}$ (with $N\geq 2$) be a $\mathcal{C}^{1}$-function
and let $p\in \mathbb{R}^{N}$ and $c\in \mathbb{R}$ be such that
$V(p) < c$ with $V^{-1}(c)$ bounded and nonempty. Let also
$M:=V^{-1}(\mathopen{]}-\infty,c\mathclose{]})$.
Assume \eqref{eq-corollary} and \eqref{eq-rad}.
Then, there exists a $T$-periodic solution of \eqref{eq-3.1} with values in $M$.
\end{corollary}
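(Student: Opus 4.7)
The plan is to obtain Corollary~\ref{cor-3.2} as a direct specialization of Theorem~\ref{th-starshaped}, with Lemma~\ref{lem-3.4} supplying the geometric structure of $M$. I would proceed in three short steps.

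First, I invoke Lemma~\ref{lem-3.4}: assumption \eqref{eq-rad}, together with $V(p) < c$ and $V^{-1}(c)$ nonempty and bounded, yields $\delta > 0$ such that $M = V^{-1}(\mathopen{]}-\infty, c\mathclose{]})$ is strictly star-shaped with respect to every point of $B[p, \delta] \subset G := \mathrm{int\,}M = V^{-1}(\mathopen{]}-\infty, c\mathclose{[})$. In particular, $G$ is nonempty, open and bounded (boundedness of $M$ is built into the conclusion of Lemma~\ref{lem-3.4}, since every ray from $p$ meets the bounded set $V^{-1}(c)$), and one also reads off that $\overline{G} = M$ and $\partial G = V^{-1}(c)$ is compact.

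Second, I construct a family of non-degenerate bounding functions by setting, for each $u \in \partial G$,
\[
V_u(x) := V(x) - c, \qquad x \in B(u, r_u),
\]
where $r_u > 0$ is chosen small enough that $\nabla V$ does not vanish on $B(u, r_u)$; this is possible because $\nabla V(u) \neq 0$ (a direct consequence of \eqref{eq-rad}) and $\nabla V$ is continuous. Then $V_u(u) = 0$ and $V_u < 0$ on $G \cap B(u, r_u)$, so these are bounding functions in the sense used in Section~\ref{section-3}. Compactness of $\partial G$ together with the continuity and non-vanishing of $\nabla V$ on $\partial G$ yields constants $0 < \eta^- \leq \eta^+$ with $\eta^- \leq \|\nabla V_u(u)\| \leq \eta^+$ for all $u\in\partial G$, that is condition \eqref{hp-non-deg*}, so the family is non-degenerate.

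Finally, since $\nabla V_u(u) = \nabla V(u)$, hypothesis \eqref{eq-corollary} is literally condition \eqref{eq-3.4} for this family. All the assumptions of Theorem~\ref{th-starshaped} are then in place, and its conclusion furnishes the desired $T$-periodic solution of \eqref{eq-3.1} with values in $\overline{G} = M$. Essentially no new obstacle arises here: the delicate geometric work (strict star-shapedness of sub-level sets together with the strong kernel property) is absorbed into Lemma~\ref{lem-3.4}, while the dynamical content is handled by Theorem~\ref{th-starshaped} through Theorem~\ref{th-main}. The only fine print worth noticing is the verification that the ball $B[p,\delta]$ supplied by Lemma~\ref{lem-3.4} lies in $G$, but this is stated explicitly in that lemma.
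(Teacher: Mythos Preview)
Your proposal is correct and follows precisely one of the two alternative routes the paper itself indicates: invoke Lemma~\ref{lem-3.4} to obtain the strict star-shapedness of $M$ with strong kernel $B[p,\delta]\subset G$, and then apply Theorem~\ref{th-starshaped} with the bounding functions $V_u(x)=V(x)-c$. The paper also mentions the equivalent alternative of combining Lemma~\ref{lem-3.4} with Corollary~\ref{cor-3.1} and Lemma~\ref{lem-3.2}, but your chosen route is one the authors explicitly sanction.
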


\begin{proof}
We apply Lemma~\ref{lem-3.4} together with Corollary~\ref{cor-3.1} and Lemma~\ref{lem-3.2}, or, alternatively,
Theorem~\ref{th-starshaped}, jointly with Lemma~\ref{lem-3.4}.
\end{proof}

The above corollary provides an example of application of Theorem~\ref{th-starshaped}
to strictly star-shaped sets with the boundary described by a single regular
bounding function $V$. As a second application, we propose an application to domains
with possible non-smooth boundary, using a concept of outer normals due to
Bony \cite{Bo-1969}. Let $G$ be an open and bounded set with
$M:=\overline{G}\subset \Omega$ and let $u\in \partial M$. A vector $\nu\neq0$
is called an \textit{outer normal} to $M$ in $u$, according to Bony, if
$u$ is a point of $M$ at minimal distance from $u+\lambda\nu$ for some $\lambda >0$.
Playing on the coefficient $\lambda > 0$, one can equivalently express this fact,
by assuming that $B(u+ \nu,\|\nu\|) \cap M=\emptyset$, or
$B[u+ \nu,\|\nu\|] \cap M=\{u\}$ (actually, different but equivalent definitions
have been considered by some authors \cite{Bo-1969, Re-1972, ReWa-1975}).
We also denote by $N_{\mathrm{B}}(u)$ the set of all (Bony) outer normals to $M$ in $u$.
Then the following result holds \cite[Theorem~2.1]{Bo-1969} (see also
\cite[Theorem~1]{Re-1972} and \cite[Theorem~1]{ReWa-1975}).

\begin{theorem}\label{th-Bony}
Let $M:=\overline{G}\subset \Omega$, where $G$ is a nonempty open bounded set. Let $f=f(t,x)$ be continuous and locally Lipschitz
continuous in the $x$-variable.
Suppose that
\begin{equation}\label{eq-Bony}
\langle f(t,u),\nu \rangle \leq 0, \quad \text{for all $t\in \mathopen{[}0,T\mathclose{]}$, $u\in \partial M$, $\nu \in N_{\mathrm{B}}(u)$.}
\end{equation}
Then, $M$ is positively invariant with respect to the solutions of \eqref{eq-3.1}.
\end{theorem}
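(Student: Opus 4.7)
The plan is to show that the squared distance function $\phi(t):=d_M(y(t))^2$, where $d_M(x):=\inf_{z\in M}\|x-z\|$, is identically zero along any (local) solution $y$ of \eqref{eq-3.1} with $y(0)\in M$. Since local Lipschitz continuity of $f$ in $x$ ensures uniqueness (and hence the problem really is about the single non-continuable solution emanating from each initial datum), it is enough to prove a one-sided differential inequality $D^{+}\phi(t)\leq 2L\phi(t)$ on some tubular neighborhood of $M$ where $f(t,\cdot)$ is $L$-Lipschitz, and then invoke Gronwall. A standard continuation argument then extends the solution to $[0,T]$.

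The geometric bridge between the distance function and the hypothesis \eqref{eq-Bony} is the following elementary remark: if $x\notin M$ and $u\in M$ achieves the minimum in the definition of $d_M(x)$, then $\nu:=x-u\neq 0$ and
\begin{equation*}
B(u+\nu,\|\nu\|)=B(x,\|x-u\|)
\end{equation*}
is disjoint from $M$ by the very minimality of $u$, so $\nu\in N_{\mathrm{B}}(u)$. Hypothesis \eqref{eq-Bony} then gives $\langle f(t,u),x-u\rangle\leq 0$ for all $t\in[0,T]$.

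With this in hand, I would bound $\phi(t+h)$ for small $h>0$ by choosing a closest point $u_t\in M$ for $y(t)$ (which need not be unique) and writing
\begin{equation*}
\phi(t+h)\leq \|y(t+h)-u_t\|^2=\|y(t)-u_t\|^2+2h\langle y(t)-u_t,f(t,y(t))\rangle+o(h).
\end{equation*}
Splitting $f(t,y(t))=f(t,u_t)+[f(t,y(t))-f(t,u_t)]$, the first inner product is $\leq 0$ by the Bony condition applied to $\nu_t:=y(t)-u_t$, while the second is bounded by $L\|y(t)-u_t\|^{2}=L\phi(t)$ thanks to the Lipschitz estimate. This yields $D^{+}\phi(t)\leq 2L\phi(t)$ when $y(t)\notin M$; in the case $y(t)\in M$ the bound is trivial from $d_M(y(t+h))\leq\|y(t+h)-y(t)\|=O(h)$. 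A Gronwall-type comparison for upper-right Dini derivatives, with $\phi(0)=0$, gives $\phi\equiv 0$.

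The main subtlety is that the nearest-point projection onto $M$ may fail to be single-valued or smooth, so $\phi$ is in general only absolutely continuous and we cannot simply differentiate it; the one-sided estimate above is what is needed, and it uses only the \emph{existence} of a closest point (guaranteed by compactness of $M$ on bounded sets) and not its uniqueness. A secondary technical point is ensuring global existence on $[0,T]$: once $y$ stays in $M$ it cannot blow up, so the standard continuation theorem applies.
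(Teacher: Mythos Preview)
The paper does not supply its own proof of this theorem; it is quoted as a known result from Bony (1969) and Redheffer, Redheffer--Walter. Your argument is correct and is essentially the classical proof found in those references: bound the upper Dini derivative of the squared distance $\phi(t)=d_M(y(t))^{2}$ along the solution by observing that the vector $y(t)-u_t$ from a nearest point $u_t\in\partial M$ to $y(t)$ is a Bony outer normal at $u_t$, split $f(t,y(t))=f(t,u_t)+[f(t,y(t))-f(t,u_t)]$ to exploit \eqref{eq-Bony} and the Lipschitz hypothesis, and conclude $\phi\equiv 0$ by a Gronwall comparison. The one point you leave implicit---that the Lipschitz constant can be taken uniform in $t\in[0,T]$ on a compact tubular neighborhood of $M$---is standard from the joint continuity of $f$ and the compactness of $[0,T]\times M$.
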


Clearly, under the assumptions of Theorem~\ref{th-Bony},
if $M$ has the (FPP) then it contains a $T$-periodic solution of
\eqref{eq-3.1}. In \cite{Re-1972,ReWa-1975} the Lipschitz condition was improved
to a suitable one-sided uniqueness hypothesis.
As in Theorem~\ref{th-cm}, the above result requires the inequality
\eqref{eq-Bony} to be satisfied \textit{for all} the outer normals at the boundary points.
Similarly as in Theorem~\ref{th-fz}, we propose now an existence result of
periodic solutions where, for the boundary condition, we assume that
the inequality is satisfied \textit{only for some} outer normals. We stress the fact that
only continuity of $f$ will be required. We restrict our application to star-shaped sets,
in order to enter in the setting of Theorem~\ref{th-starshaped};
however, our result, in principle, could be applied to more general domains,
provided we find a suitable auxiliary vector field $g$ as in Theorem~\ref{th-main}.

\begin{theorem}\label{th-starshaped1}
Let $G$ be an open bounded set with $\overline{G}(\subset \Omega)$
strictly star-shaped with respect to all the points in a ball
$B[p,\delta]\subset G$. Assume that for each $u\in \partial G$
there exists $\nu=\nu_{u}\in N_{\mathrm{B}}(u)$ such that
\begin{equation}\label{eq-Bony1}
\langle f(t,u),\nu_{u} \rangle \leq 0, \quad \text{for all $t\in \mathopen{[}0,T\mathclose{]}$.}
\end{equation}
Then, there exists a $T$-periodic solution of \eqref{eq-3.1} with values in $\overline{G}$.
\end{theorem}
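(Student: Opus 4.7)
The plan is to deduce Theorem~\ref{th-starshaped1} directly from Theorem~\ref{th-starshaped} by producing, from each chosen Bony outer normal $\nu_{u}$, a non-degenerate bounding function $V_{u}$ whose gradient at $u$ is a positive multiple of $\nu_{u}$. The star-shapedness hypothesis is already in the statement, so once such a family $(V_{u})_{u\in\partial G}$ is constructed and shown to satisfy the weak boundary inequality \eqref{eq-3.4}, the conclusion is immediate.

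To carry this out, I would first normalize by replacing $\nu_{u}$ with $\nu_{u}/\|\nu_{u}\|$; this preserves both \eqref{eq-Bony1} and membership in $N_{\mathrm{B}}(u)$, since the Bony normal cone is closed under positive rescaling. Assuming henceforth that $\|\nu_{u}\|=1$, I would set
\begin{equation*}
V_{u}(x) := 1 - \|x - (u + \nu_{u})\|^{2}, \qquad x \in \mathbb{R}^{N}.
\end{equation*}
This is a smooth function with $V_{u}(u)=0$ and $\nabla V_{u}(u) = 2\nu_{u}$, so $\|\nabla V_{u}(u)\| = 2$ for every $u\in\partial G$, yielding the non-degeneracy condition \eqref{hp-non-deg*} with $\eta^- = \eta^+ = 2$.

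The key check is that $V_{u}$ is a bounding function for $G$ on some ball $B(u,r_{u})$ and that \eqref{eq-3.4} is verified. From the equivalent form of Bony's definition recalled in the excerpt, $B[u+\nu_{u},1]\cap\overline{G}=\{u\}$, so $\|x-(u+\nu_{u})\|>1$ for every $x\in\overline{G}\setminus\{u\}$; hence $V_{u}(x)<0$ on $G\cap B(u,r_{u})$ for \emph{any} choice of $r_{u}>0$, which is the bounding-function property. Moreover,
\begin{equation*}
\langle f(t,u), \nabla V_{u}(u)\rangle = 2\langle f(t,u), \nu_{u}\rangle \leq 0, \qquad t\in[0,T],
\end{equation*}
by \eqref{eq-Bony1}, so \eqref{eq-3.4} holds. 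Theorem~\ref{th-starshaped} then delivers the desired $T$-periodic solution in $\overline{G}$.

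The only step that requires genuine attention is the passage from the Bony condition to the sign of $V_{u}$ on $G$ near $u$: it is precisely the strict containment $B[u+\nu_{u},1]\cap\overline{G}=\{u\}$ (and not merely the weaker $B(u+\nu_{u},1)\cap\overline{G}=\emptyset$) that upgrades $V_{u}\leq 0$ into the strict inequality $V_{u}<0$ at boundary points $x\neq u$ needed for a bounding function. Once this observation is in hand, no further obstacle appears; in particular, we neither invoke a Lipschitz assumption on $f$ (as in Theorem~\ref{th-Bony}) nor require \eqref{eq-Bony1} to hold for \emph{every} $\nu\in N_{\mathrm{B}}(u)$, only for one.
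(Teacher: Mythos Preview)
Your overall strategy mirrors the paper's: build bounding functions $V_u$ from the Bony balls and invoke Theorem~\ref{th-starshaped}. The paper sets $V_u(x)=\tfrac12\bigl(\|\nu_u\|^2-\|x-(u+\nu_u)\|^2\bigr)$ \emph{without} normalizing $\nu_u$, obtaining $\nabla V_u(u)=\nu_u\neq 0$; the non-degeneracy hypothesis of Theorem~\ref{th-starshaped} is then met via the rescaling remark preceding \eqref{hp-non-deg*} (divide $V_u$ by $\|\nabla V_u(u)\|$).

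Your preliminary normalization of $\nu_u$, however, introduces a genuine gap. It is true that $N_{\mathrm{B}}(u)$ is a cone, so $\hat\nu_u:=\nu_u/\|\nu_u\|\in N_{\mathrm{B}}(u)$; but membership in $N_{\mathrm{B}}(u)$ only guarantees $B[u+\lambda\hat\nu_u,\lambda]\cap\overline G=\{u\}$ for \emph{some} $\lambda>0$ (namely $\lambda=\|\nu_u\|$), not for $\lambda=1$. When $\|\nu_u\|<1$ the unit ball $B[u+\hat\nu_u,1]$ strictly contains the original Bony ball and can meet $G$ arbitrarily close to $u$, so your $V_u$ need not be negative on any $G\cap B(u,r_u)$. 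A concrete planar instance: take $u=0$, $\nu_u=(0,\tfrac12)$, and let $G$ locally be $\{(x,y):y<x^2\}$. Then $B[(0,\tfrac12),\tfrac12]\cap\overline G=\{0\}$, yet the points $(x,\tfrac34x^2)\in G$ satisfy $x^2+(\tfrac34x^2-1)^2<1$ for all small $x\neq 0$, hence lie in $B[(0,1),1]$ and give $V_u>0$ there; no choice of $r_u$ rescues the bounding-function property.

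The fix is precisely what the paper does: keep the original $\nu_u$ when defining $V_u$ (so that the correct Bony ball is used and $V_u<0$ on all of $G\setminus\{u\}$), and if you want a uniform gradient bound, normalize the \emph{function} $V_u$ afterwards by dividing by $\|\nabla V_u(u)\|$, rather than normalizing the vector $\nu_u$ beforehand.
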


\begin{proof}
By the hypothesis, for each $u\in \partial G$, we have an outer normal $\nu_{u}$ such that
\eqref{eq-Bony1} is satisfied. For such $\nu_{u}$ we have that
$B[u+ \nu_{u},\|\nu_{u}\|] \cap \overline{G}=\{u\}$ (according to one of the equivalent definitions of
Bony outer normal). Then, we define the function
\begin{equation*}
V_{u}(x):= \dfrac{1}{2} \bigl{(} \|\nu_{u}\|^{2} - \|x- (u + \nu_{u})\|^{2} \bigr{)}.
\end{equation*}
It is immediate to check that $V_{u}(u)=0$, $V_{u}(x)< 0$ for all $x\in G$, and
$\nabla V_{u}(x) = u + \nu_{u} - x$, so that $\nabla V_{u}(u) = \nu_{u}$. Hence \eqref{eq-3.4}
follows from \eqref{eq-Bony1} and we conclude by applying Theorem~\ref{th-starshaped}.
\end{proof}

\begin{remark}\label{rem-3.5}
Clearly Theorem~\ref{th-starshaped1} extends Theorem~\ref{th-cm1}. We also note that, in the context of
open and bounded star-shaped sets, the difference between Theorem~\ref{th-Bony} and Theorem~\ref{th-starshaped1}
is stronger than that between Theorem~\ref{th-cm} and Theorem~\ref{th-cm1}. Indeed, in the case of
convex bodies, all the points at the boundary possess outer normals. This is no more true for
star-shaped bodies. For instance, if we consider the sets
\begin{equation}\label{def-G1}
G_{1}:= \bigl{\{}(x_{1},x_{2})\in \mathbb{R}^{2} \colon (x_{1}^{2} + x_{2}^{2})<1, \, |x_{2}|>x_{1}\bigr{\}}
\end{equation}
and
$\overline{G_{1}}=\{(x_{1},x_{2})\in \mathbb{R}^{2} \colon (x_{1}^{2} + x_{2}^{2})\leq 1, \, (|x_{2}|\geq x_{1})\}$
(see Figure~\ref{fig-03}),
we have that $\overline{G_{1}}$ is strictly star-shaped with respect to all the points in a ball
$B[p,\delta]\subset G_{1}$ for $p=(-1/2,0)$ and $\delta >0$ sufficiently small. Observe that
$\overline{G_{1}}$ is homeomorphic to a closed disc by Lemma~\ref{lem-3.2}. Hence, if we assume $f$ locally
Lipschitz continuous in $x=(x_{1},x_{2})$ and \eqref{eq-Bony} satisfied \textit{for all} the outer normals
at $\partial G_{1}$ (whenever they exist), then, by Theorem~\ref{th-Bony}, we have the existence of a $T$-periodic solution
with values in $\overline{G_{1}}$. In this case, we have to check the sub-tangentiality condition
\eqref{eq-Bony} for all the outer normals at the corner points $(1/\sqrt{2},\pm 1/\sqrt{2})$
but no condition has to be checked at $(0,0)$, a point that does not possess any outer normal.
On the other hand, Theorem~\ref{th-starshaped1} cannot be applied because at $(0,0)$
there are no outer normals. Conversely, if we consider the sets
\begin{equation}\label{def-G2}
G_{2} := \bigl{\{}(x_{1},x_{2})\in \mathbb{R}^{2} \colon (x_{1}^{2} + x_{2}^{2})<1, \, (x_{1}-\sqrt{2})^{2} + x_{2}^{2}>1\bigr{\}}
\end{equation}
and
$\overline{G_{2}}=\{(x_{1},x_{2})\in \mathbb{R}^{2} \colon (x_{1}^{2} + x_{2}^{2})\leq 1, \, (x_{1}-\sqrt{2})^{2} + x_{2}^{2}\geq 1)\}$
(see Figure~\ref{fig-03}),
we have again that $\overline{G_{2}}$ is strictly star-shaped with respect to all the points in a ball
$B[p,\delta]\subset G_{2}$ for $p=(-1/2,0)$ and $\delta >0$ sufficiently small.
Now, to apply Theorem~\ref{th-starshaped1} we just need $f$ to be continuous and check the sub-tangentiality condition
at the boundary, by taking \textit{only one} outer normal at the corner points $(1/\sqrt{2},\pm 1/\sqrt{2})$.
\hfill$\lhd$
\end{remark}

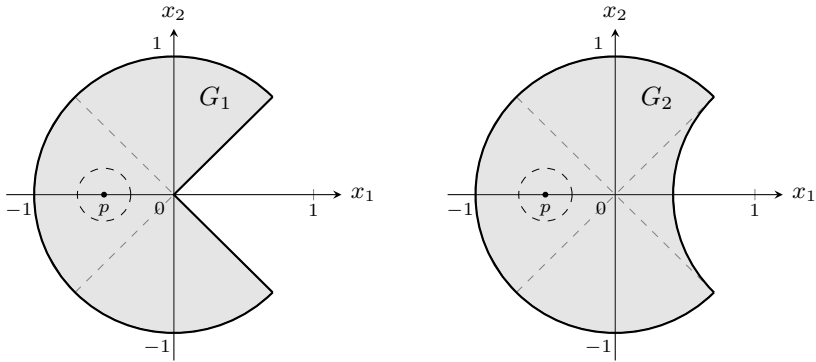
\begin{figure}[htb]
\centering
\begin{tikzpicture}
\begin{axis}[
  tick label style={font=\scriptsize},
  axis y line=middle,
  axis x line=middle,
  xtick={-1,1},
  ytick={-1,1},
  xticklabels={ , },
  yticklabels={ , },
  xlabel={\small $x_{1}$},
  ylabel={\small $x_{2}$},
every axis x label/.style={
    at={(ticklabel* cs:1.0)},
    anchor=west,
},
every axis y label/.style={
    at={(ticklabel* cs:1.0)},
    anchor=south,
},
  width=6cm,
  height=6cm,
  xmin=-1.2,
  xmax=1.2,
  ymin=-1.2,
  ymax=1.2]
\addplot  [color=gray, line width=0.2pt, dashed] coordinates {(-0.707107,-0.707107) (0.707107,0.707107)};
\addplot  [color=gray, line width=0.2pt, dashed] coordinates {(-0.707107,0.707107) (0.707107,-0.707107)};
\addplot [color=black, fill=gray, fill opacity=0.2, line width=0pt] coordinates {(0.707107, 0.707107) (0,0) (0.707107,-0.707107) (0.7, -0.714143) (0.6, -0.8) (0.5, -0.866025) (0.4, -0.916515) (0.3, -0.953939) (0.2, -0.979796) (0.1, -0.994987) (0., -1.) (-0.1, -0.994987) (-0.2, -0.979796) (-0.3, -0.953939) (-0.4, -0.916515) (-0.5, -0.866025) (-0.6, -0.8) (-0.7, -0.714143) (-0.8, -0.6) (-0.9, -0.43589) (-0.905, -0.425412) (-0.91, -0.414608) (-0.915, -0.403454) (-0.92, -0.391918) (-0.925, -0.379967) (-0.93, -0.36756) (-0.935, -0.354648) (-0.94, -0.341174) (-0.945, -0.32707) (-0.95, -0.31225) (-0.955, -0.296606) (-0.96, -0.28) (-0.965, -0.26225) (-0.97, -0.243105) (-0.975, -0.222205) (-0.98, -0.198997) (-0.985, -0.172554) (-0.99, -0.141067) (-0.995, -0.0998749) (-1., 0.) (-0.995, 0.0998749) (-0.99, 0.141067) (-0.985, 0.172554) (-0.98, 0.198997) (-0.975, 0.222205) (-0.97, 0.243105) (-0.965, 0.26225) (-0.96, 0.28) (-0.955, 0.296606) (-0.95, 0.31225) (-0.945, 0.32707) (-0.94, 0.341174) (-0.935, 0.354648) (-0.93, 0.36756) (-0.925, 0.379967) (-0.92, 0.391918) (-0.915, 0.403454) (-0.91, 0.414608) (-0.905, 0.425412) (-0.9, 0.43589) (-0.8, 0.6) (-0.7, 0.714143) (-0.6, 0.8) (-0.5, 0.866025) (-0.4, 0.916515) (-0.3, 0.953939) (-0.2, 0.979796) (-0.1, 0.994987) (0., 1.) (0.1, 0.994987) (0.2, 0.979796) (0.3, 0.953939) (0.4, 0.916515) (0.5, 0.866025) (0.6, 0.8) (0.7, 0.714143) (0.707107,0.707107)};
\fill (axis cs: -0.5,0) circle (1.1pt);
\draw [line width=0.2pt, dashed] (axis cs: -0.5,0) circle (10pt);
\draw [line width=0.8pt] (axis cs:0.707107,0.707107) arc [radius=52.4pt, start angle=45, end angle=315];
\addplot  [color=black, line width=0.8pt] coordinates {(0.707107,-0.707107) (0,0) (0.707107,0.707107)};
\node at (axis cs: 0.3,0.7) {$G_{1}$};
\node at (axis cs: -0.12,-1.1) {\scriptsize{$-1$}};
\node at (axis cs: -0.12,1.1) {\scriptsize{$1$}};
\node at (axis cs: -1.11,-0.11) {\scriptsize{$-1$}};
\node at (axis cs: 1,-0.11) {\scriptsize{$1$}};
\node at (axis cs: -0.5,-0.11) {\scriptsize{$p$}};
\node at (axis cs: -0.1,-0.1) {\scriptsize{$0$}};
\end{axis}
\end{tikzpicture}
\quad\quad
\begin{tikzpicture}
\begin{axis}[
  tick label style={font=\scriptsize},
  axis y line=middle,
  axis x line=middle,
  xtick={-1,1},
  ytick={-1,1},
  xticklabels={ , },
  yticklabels={ , },
  xlabel={\small $x_{1}$},
  ylabel={\small $x_{2}$},
every axis x label/.style={
    at={(ticklabel* cs:1.0)},
    anchor=west,
},
every axis y label/.style={
    at={(ticklabel* cs:1.0)},
    anchor=south,
},
  width=6cm,
  height=6cm,
  xmin=-1.2,
  xmax=1.2,
  ymin=-1.2,
  ymax=1.2]
\addplot  [color=gray, line width=0.2pt, dashed] coordinates {(-0.707107,-0.707107) (0.707107,0.707107)};
\addplot  [color=gray, line width=0.2pt, dashed] coordinates {(-0.707107,0.707107) (0.707107,-0.707107)};
\addplot [color=black, fill=gray, fill opacity=0.2, line width=0pt] coordinates {(0.707107,-0.707107) (0.7, -0.714143) (0.6, -0.8) (0.5, -0.866025) (0.4, -0.916515) (0.3, -0.953939) (0.2, -0.979796) (0.1, -0.994987) (0., -1.) (-0.1, -0.994987) (-0.2, -0.979796) (-0.3, -0.953939) (-0.4, -0.916515) (-0.5, -0.866025) (-0.6, -0.8) (-0.7, -0.714143) (-0.8, -0.6) (-0.9, -0.43589) (-0.905, -0.425412) (-0.91, -0.414608) (-0.915, -0.403454) (-0.92, -0.391918) (-0.925, -0.379967) (-0.93, -0.36756) (-0.935, -0.354648) (-0.94, -0.341174) (-0.945, -0.32707) (-0.95, -0.31225) (-0.955, -0.296606) (-0.96, -0.28) (-0.965, -0.26225) (-0.97, -0.243105) (-0.975, -0.222205) (-0.98, -0.198997) (-0.985, -0.172554) (-0.99, -0.141067) (-0.995, -0.0998749) (-1., 0.) (-0.995, 0.0998749) (-0.99, 0.141067) (-0.985, 0.172554) (-0.98, 0.198997) (-0.975, 0.222205) (-0.97, 0.243105) (-0.965, 0.26225) (-0.96, 0.28) (-0.955, 0.296606) (-0.95, 0.31225) (-0.945, 0.32707) (-0.94, 0.341174) (-0.935, 0.354648) (-0.93, 0.36756) (-0.925, 0.379967) (-0.92, 0.391918) (-0.915, 0.403454) (-0.91, 0.414608) (-0.905, 0.425412) (-0.9, 0.43589) (-0.8, 0.6) (-0.7, 0.714143) (-0.6, 0.8) (-0.5, 0.866025) (-0.4, 0.916515) (-0.3, 0.953939) (-0.2, 0.979796) (-0.1, 0.994987) (0., 1.) (0.1, 0.994987) (0.2, 0.979796) (0.3, 0.953939) (0.4, 0.916515) (0.5, 0.866025) (0.6, 0.8) (0.7, 0.714143) (0.707107,0.707107) (0.7, 0.699928) (0.65, 0.644963) (0.6, 0.580565) (0.590711, 0.567312) (0.581421, 0.553586) (0.572132, 0.53935) (0.562843, 0.524564) (0.553553, 0.50918) (0.544264, 0.493141) (0.534975, 0.476381) (0.525685, 0.458822) (0.516396, 0.440368) (0.507107, 0.420901) (0.497817, 0.400273) (0.488528, 0.378294) (0.479239, 0.354714) (0.469949, 0.329189) (0.46066, 0.301224) (0.451371, 0.270063) (0.442082, 0.234434) (0.432792, 0.191865) (0.423503, 0.135987) (0.414214, 0.) (0.423503, -0.135987) (0.432792, -0.191865) (0.442082, -0.234434) (0.451371, -0.270063) (0.46066, -0.301224) (0.469949, -0.329189) (0.479239, -0.354714) (0.488528, -0.378294) (0.497817, -0.400273) (0.507107, -0.420901) (0.516396, -0.440368) (0.525685, -0.458822) (0.534975, -0.476381) (0.544264, -0.493141) (0.553553, -0.50918) (0.562843, -0.524564) (0.572132, -0.53935) (0.581421, -0.553586) (0.590711, -0.567312) (0.6, -0.580565) (0.65, -0.644963) (0.7, -0.699928) (0.707107,-0.707107)};
\fill (axis cs: -0.5,0) circle (1.1pt);
\draw [line width=0.2pt, dashed] (axis cs: -0.5,0) circle (10pt);
\draw [line width=0.8pt] (axis cs:0.707107,0.707107) arc [radius=52.4pt, start angle=45, end angle=315];
\draw [line width=0.8pt] (axis cs:0.707107,0.707107) arc [radius=52.4pt, start angle=135, end angle=225];
\node at (axis cs: 0.3,0.7) {$G_{2}$};
\node at (axis cs: -0.12,-1.1) {\scriptsize{$-1$}};
\node at (axis cs: -0.12,1.1) {\scriptsize{$1$}};
\node at (axis cs: -1.11,-0.11) {\scriptsize{$-1$}};
\node at (axis cs: 1,-0.11) {\scriptsize{$1$}};
\node at (axis cs: -0.5,-0.11) {\scriptsize{$p$}};
\node at (axis cs: -0.1,-0.1) {\scriptsize{$0$}};
\end{axis}
\end{tikzpicture}\captionof{figure}{Representation of the set $G_{1}$
defined in \eqref{def-G1} (on the left) and of the set $G_{2}$
defined in \eqref{def-G2} (on the right).}
\label{fig-03}
\end{figure}

\section*{Acknowledgments}
The authors thank Prof.~Jean Mawhin for the interesting
and helpful discussions on the subject of the present paper.

\bibliographystyle{elsart-num-sort}
\bibliography{FeZa-biblio-2022}

\end{document}